\definecolor{lavender}{rgb}{0.5,0,1.0}
\newenvironment{enumerate*}%
  {\begin{enumerate}[(I)]%
    \setlength{\itemsep}{10pt}%
    \setlength{\parskip}{0pt}}%
  {\end{enumerate}}
\newtheorem{theorem}{Theorem}[section]
\newtheorem{proposition}[theorem]{Proposition}
\newtheorem{conjecture}[theorem]{Conjecture}
\newtheorem{question}[theorem]{Question}
\newtheorem{lemma}[theorem]{Lemma}
\theoremstyle{definition}
\newcommand{\dfn}[1]{\textcolor{blue}{\emph{#1}}}
\newcommand{\SortNoop}[1]{}
\def\Z{\mathbb{Z}}
\newcommand{\ba}{{\rm \bf{a}}}
\newcommand{\bb}{{\rm \bf{b}}}
\newcommand{\bc}{{\rm \bf{c}}}
\renewcommand{\Re}{\operatorname{Re}}
\begin{document}

\title{Tilings of Benzels via Generalized Compression}
\subjclass[2010]{}

\author[Colin Defant]{Colin Defant}
\address[]{Department of Mathematics, Massachusetts Institute of Technology, Cambridge, MA 02139, USA}
\email{colindefant@gmail.com}

\author[Leigh Foster]{Leigh Foster}
\address[]{Department of Mathematics, University of Oregon, Eugene, OR 97403, USA}
\email{leighf@uoregon.edu}

\author[Rupert Li]{Rupert Li}
\address[]{Massachusetts Institute of Technology, Cambridge, MA 02139, USA}
\email{rupertli@mit.edu}

\author[James Propp]{James Propp}
\address[]{Department of Mathematical Sciences, UMass Lowell, Lowell, MA 01854, USA}
\email{jamespropp@gmail.com}

\author[Benjamin Young]{Benjamin Young}
\address[]{Department of Mathematics, University of Oregon, Eugene, OR 97403, USA}
\email{bjy@uoregon.edu}

\maketitle

\begin{abstract}
Defant, Li, Propp, and Young recently resolved two enumerative conjectures of Propp concerning the tilings of regions in the hexagonal grid called \textit{benzels} using two types of prototiles called \textit{stones} and \textit{bones} (with varying constraints on allowed orientations of the tiles).
Their primary tool, a bijection called \emph{compression} that converts certain $k$-ribbon tilings to $(k-1)$-ribbon tilings, allowed them to reduce their problems to the enumeration of dimers (i.e., perfect matchings) of certain graphs.
We present a generalized version of compression that no longer relies on the perspective of partitions and skew shapes.
Using this strengthened tool, we resolve three more of Propp's conjectures and recast several others as problems about perfect matchings.
\end{abstract}

\section{Introduction}\label{SecIntro}
Enumeration of tilings of a region using translates of prescribed \textit{prototiles} is a well-established topic in combinatorics; one notable example is the problem of counting domino tilings of the Aztec diamond in the square grid \cite{EKLP}, which is the setting in which the arctic circle phenomenon was first discovered \cite{CEP}. We refer readers to \cite{ProppSurvey} for a survey of enumerative tiling methods and results. Conway and Lagarias \cite{ConwayLagarias} studied some tiling problems in the hexagonal grid, where both the regions to be tiled and the tiles themselves are composed of regular hexagons and are sometimes referred to as \dfn{polyhexes}.
In their setting, the tiles are \dfn{trihexes}, which consist of three hexagons -- specifically, these are the trihexes referred to below as stones and bones.
In a novel application of combinatorial group theory, Conway and Lagarias devised a new necessary condition for a region to admit a tiling by stones and bones. Thurston \cite{Thurston} expanded upon these results with alternative perspectives, and Lagarias and Romano \cite{LagariasRomano} proved an exact formula for the number of tilings in a particular one-parameter family of trihex tiling problems.
Meanwhile, physicists working in an essentially equivalent (dual) setting studied \emph{trimer covers} \cite{VN} of the regular 6-valent planar graph. However, the physicists' interests were in asymptotics rather than exact enumerations, and their proofs relied on the Bethe ansatz, which has not been rigorously established in all the contexts where it has been applied (even though it tends to give correct answers).

Motivated by finding a trimer analogue of the Aztec diamond (and perhaps new kinds of arctic circle phenomena), Propp \cite{Propp2022trimer} proposed using the tiles studied by Conway and Lagarias to tile different sorts of regions in the hexagonal grid not considered in earlier literature, which he dubbed {\em benzels}.
He made numerous conjectures regarding the exact number of tilings of those regions, and Defant, Li, Propp, and Young \cite{DLPY} resolved some of his conjectures.
The key mechanism they developed is \textit{compression}, which converts the problem from one involving 3-cell tiles to one involving 2-cell tiles. Tilings using 2-cell tiles are equivalent to perfect matchings of graphs, which have been well-studied and are amenable to a host of enumerative techniques.

We follow the conventions of \cite{DLPY}, which expand upon the conventions of \cite{Propp2022trimer}.
We view the hexagonal grid as a tiling of the complex plane with regular hexagons of side length 1, which we refer to as \dfn{cells} of the grid.
Specifically, one of the hexagons has vertices $1\pm\omega^j$ for $j\in\{0,1,2\}$, where $\omega=e^{2\pi i/3}$ is a primitive third root of unity.
Suppose $a$ and $b$ are positive integers satisfying $2 \leq a \leq 2b$ and $2\leq b\leq 2a$; we define the \dfn{$(a,b)$-benzel} to be the union of the cells lying entirely within the hexagon with vertices $\omega^j(a\omega+b)$ and $-\omega^j(a+b\omega)$ for $j\in\{0,1,2\}$.
This hexagon is invariant under rotation by 120$^\circ$ and has sides whose lengths alternate between $2a-b$ and $2b-a$ (hence the inequalities $a\leq 2b$ and $b\leq 2a$).
In particular, the equality case corresponds to a triangle, a degenerate hexagon.
As discussed in \cite[Remark 5.1]{DLPY}, the $(n,2n-2)$-benzel, the $(n,2n-1)$-benzel, and the $(n,2n)$-benzel coincide because the extension of the boundary hexagon from the case $(n,2n-2)$ to the case $(n,2n)$ does not add any additional cells in their entirety.
The same holds for the $(2n-2,n)$-benzel, the $(2n-1,n)$-benzel, and the $(2n,n)$-benzel.
Aside from these cases, the map sending the pair $(a,b)$ to the $(a,b)$-benzel is injective, so henceforth we will assume the stronger inequalities $2 \leq a \leq 2b-2$ and $2\leq b\leq 2a-2$.
\cref{fig:benzel_example} shows the $(9,11)$-benzel.
\begin{figure}
\centering
\includegraphics[width = 0.33\textwidth]{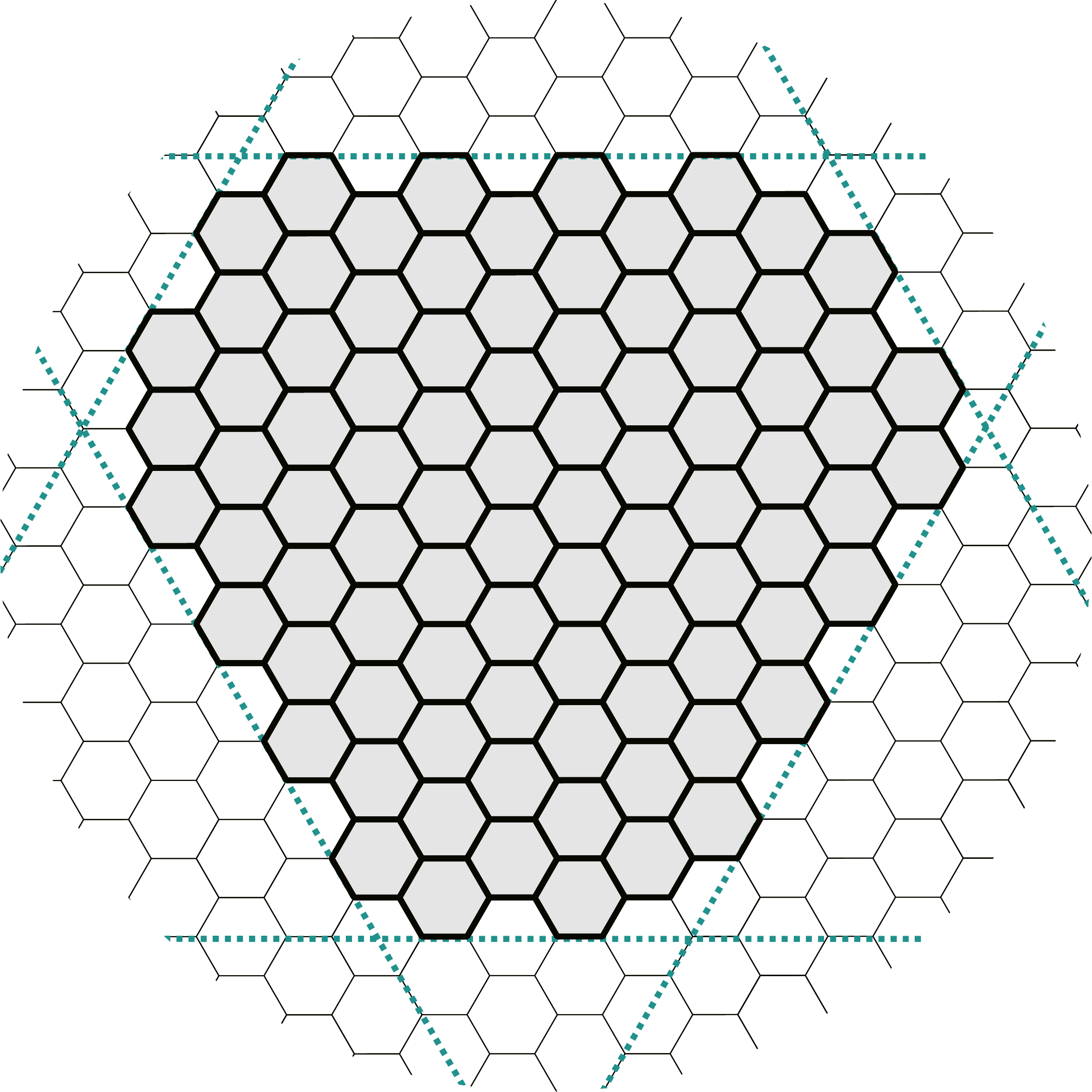}
\caption{The $(9,11)$-benzel (shaded).}
\label{fig:benzel_example}
\end{figure}

The prototiles in \cite{ConwayLagarias,Propp2022trimer} are special types of trihexes. These prototiles come in two forms: a \dfn{stone} consists of three pairwise adjacent cells arranged in a triangle, while a \dfn{bone} consists of three consecutive cells whose centers are colinear.
Here, we follow the naming conventions of Propp \cite{Propp2022trimer}; Conway and Lagarias \cite{ConwayLagarias} and Thurston \cite{Thurston} used different terminology.
It will be useful later to distinguish between the different rotations of these stones and bones.
As a result, we consider the five prototiles in \cref{prototiles} to be distinct, calling them the \dfn{left stone}, \dfn{right stone}, \dfn{rising bone}, \dfn{falling bone}, and \dfn{vertical bone}, respectively.
Tilings of regions in the hexagonal grid using only these five prototiles will be referred to as \dfn{stones-and-bones tilings}.

\begin{figure}[htbp]

\includegraphics[width = 0.75\textwidth]{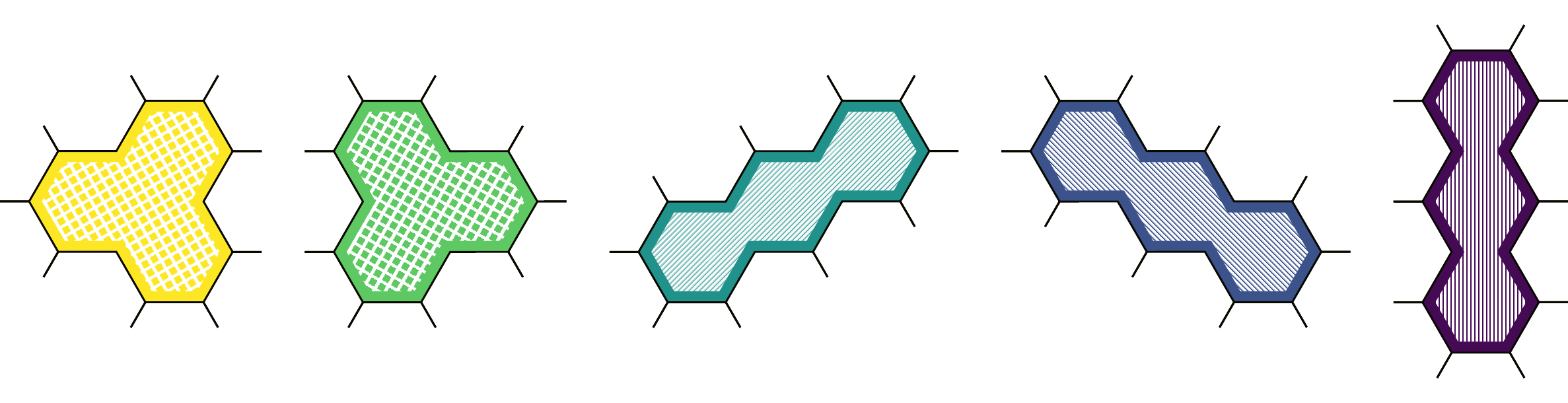}
\caption{The five prototiles: the left stone, right stone, rising bone, falling bone, and vertical bone, respectively.}
\label{prototiles}
\end{figure}

Tiling problems frequently restrict the set of possible prototiles.
Using our five prototiles, there are $2^5-1=31$ nonempty subsets and, consequently, 31 enumeration questions that we can ask for each $(a,b)$-benzel.
However, as benzels have $120^\circ$ rotational symmetry that preserves the two stones, the exact set of allowed bones does not affect the number of stones-and-bones tilings; all that matters is the number of allowed types of bones.
This equivalence reduces the number of problems to $2\cdot2\cdot4-1=15$.
Propp \cite{Propp2022trimer} computed numerical results for these various problems and provided a collection of conjectures and open questions.

Note that the $(a,b)$-benzel and the $(b,a)$-benzel are reflections of each other across the real axis; as reflection preserves the two stones and permutes the three bones, each tiling problem is the same for the $(a,b)$-benzel as for the $(b,a)$-benzel.
We will frequently use this symmetry to assume without loss of generality that $a\leq b$.

As discussed in \cref{SecPrelim}, our framework is best suited to enumeration problems forbidding at least one type of bone, so without loss of generality, the vertical bone is forbidden.
Defant, Li, Propp, and Young \cite[Theorem 1.2]{DLPY} enumerated the stones-and-bones tilings of benzels that use only left stones, rising bones, and falling bones.
Such stones-and-bones tilings exist if and only if $(a,b)\in\{(3n,3n),(3n+1,3n+2),(3n+2,3n+1)\}$ for some positive integer $n$; in this case, the number of such tilings is $(2n)!!$.

Defant, Li, Propp, and Young \cite[Theorem 1.3]{DLPY} also enumerated the stones-and-bones tilings of benzels that use only right stones, rising bones, and falling bones when $a+b\not\equiv2\pmod{3}$; in this case, at most 1 such tiling exists.
They were unable to solve the problem when  ${a+b\equiv2\pmod{3}}$; in this case, Propp \cite[Problem 5]{Propp2022trimer} gave a conjectural formula. This conjectural formula is significantly more complicated than the solutions to the aforementioned enumeration problems, suggesting the difficulty of the problem.
We prove this formula, stated in the following theorem.
\begin{theorem}\label{theorem:problem_5}
    Let $a$ and $b$ be integers with $2\leq a \leq b \leq 2a$ and $a+b\equiv2\pmod{3}$.
    Let $n=b+1-a$ and $k=(2a-b-1)/3$ so that $(a,b)=(n+3k,2n+3k-1)$.
    Then the $(a,b)$-benzel has
    \begin{align*}
        \prod_{j=1}^k \frac{(2j)!(2j+2n-2)!}{(j+n-1)!(j+n+k-1)!}
    \end{align*}
    tilings by right stones, rising bones, and falling bones.
\end{theorem}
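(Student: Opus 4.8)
The plan is to follow the same two-stage strategy that underlies the other benzel enumerations: first use generalized compression to replace the stones-and-bones tilings by perfect matchings of an explicit planar graph, and then count those matchings by a determinant. Concretely, I would apply the generalized compression bijection to the set of tilings of the $(a,b)$-benzel by right stones, rising bones, and falling bones, reducing step by step (converting $k$-ribbon tilings to $(k-1)$-ribbon tilings) until the three-cell prototiles are replaced by dimers. Because the construction is built from the honeycomb geometry, the output should be a planar bipartite graph $G_{a,b}$ whose vertex set and boundary are governed by the two parameters $n=b+1-a$ and $k=(2a-b-1)/3$; the hypothesis $a+b\equiv 2\pmod 3$ is exactly what makes the cell count of each prototile come out right, so the compressed region is well defined and the number of tilings equals the number of dimer covers of $G_{a,b}$.

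The second stage is to read the dimer covers of $G_{a,b}$ as lozenge tilings of a region in the triangular lattice, or equivalently as a family of non-intersecting lattice paths. Here the goal is to locate the starting and ending points of the paths as explicit functions of $n$ and $k$, reading them off from the forced half-edges along the boundary of the compressed region. Once these endpoints are identified, the Lindstr\"om--Gessel--Viennot lemma converts the count of dimer covers into a $k\times k$ determinant whose entries are binomial coefficients, of the schematic form
\begin{equation*}
\det_{1\le i,j\le k}\left[\binom{2n+2(i-1)}{\,j-i+\text{const}\,}\right],
\end{equation*}
where I expect the precise entries to involve the shifts $j+n-1$ and $j+n+k-1$ visible in the denominators of the target formula.

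The final step is to evaluate this determinant in closed form and check that it equals $\prod_{j=1}^{k}\frac{(2j)!\,(2j+2n-2)!}{(j+n-1)!\,(j+n+k-1)!}$. The shape of the answer---a single product over $j$ with double-factorial-like numerators $(2j)!$ and $(2j+2n-2)!$---is characteristic of the determinant evaluations collected in Krattenthaler's determinant calculus, and I would try to bring the matrix into a form to which one of those lemmas applies after pulling common factors out of each row and column, or else prove the evaluation directly by induction on $k$ via a Desnanot--Jacobi (condensation) recurrence.

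I anticipate two main obstacles. The first is bookkeeping in the compression stage: one must verify that generalized compression really terminates in a single clean bipartite graph for this family (rather than a disjoint union or a graph with a more complicated boundary), and that the bijection is cell-by-cell compatible with forbidding the vertical bone. The second, and likely harder, is the determinant evaluation itself: with two independent parameters $n$ and $k$ the matrix is a genuine two-parameter family, so matching it to a known evaluable determinant---or proving the evaluation from scratch---is where the real work lies, and it is presumably the reason this case was left open while the simpler $(2n)!!$ enumerations were not.
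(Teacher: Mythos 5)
Your first stage matches the paper's: compression converts the tilings by right stones, rising bones, and falling bones into perfect matchings of an explicit planar bipartite graph (here only one compression step is needed, since the prototiles are 3-ribbons and the hypothesis $a+b\equiv 2\pmod 3$ guarantees that $p_i=0$ for all $i\equiv 0\pmod 3$). But there is a genuine gap in your second stage, and your own worry about whether compression ``really terminates in a single clean bipartite graph'' is exactly where it lies. After forbidding the edges corresponding to left stones and deleting the resulting forced matches, the compressed graph is \emph{not} a single lozenge-tiling region: it decomposes, via the color-counting argument of \cref{lemma:color_decomposition}, into two structurally different pieces $A$ and $B$ with no matching edges between them. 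The piece $A$ is a union of nested $\mathsf{\Lambda}$-shapes (not a brickwork/honeycomb region at all) and contributes a factor of $(2k)!!$, by a further application of the same lemma; only the piece $B$ is a brickwork region that can be read as lozenge tilings. Without this decomposition, the single $k\times k$ binomial determinant you propose to set up over the whole region does not exist in the clean form you describe, and the LGV step would not go through as stated.

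Once the decomposition is in hand, the paper also avoids the determinant evaluation that you correctly flag as the hard part: the matchings of $B$ biject with plane partitions of the staircase shape $(n-1,n-2,\dots,1)$ with parts at most $k$, whose count is a known product formula due to Proctor, and a short induction on $k$ shows that $(2k)!!$ times that product equals the stated formula $\prod_{j=1}^{k}\frac{(2j)!\,(2j+2n-2)!}{(j+n-1)!\,(j+n+k-1)!}$. Your route of proving the evaluation from scratch via Krattenthaler's calculus or Desnanot--Jacobi condensation could in principle be made to work for the $B$-piece alone, but as written your plan is missing both the splitting lemma that produces the $(2k)!!$ factor and any argument for the closed-form evaluation, which are precisely the two substantive steps of the proof.
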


This result, combined with the results of Defant, Li, Propp, and Young \cite{DLPY}, completely enumerates the stones-and-bones tilings of benzels that use only right stones, rising bones, and falling bones.

When $a+b\equiv1\pmod{3}$, \cite[Theorem 1.1]{DLPY} implies that exactly one stones-and-bones tiling of the $(a,b)$-benzel exists, consisting entirely of right stones. However, aside from this case, nothing is known about the number of stones-and-bones tilings of benzels that use left stones, right stones, rising bones, and falling bones (i.e., all stones and bones except the vertical bone).
Propp \cite{Propp2022trimer} stated a number of open questions concerning such enumerations (Problems 8 through 13), and we solve Problems~12 and~13 concerning the $(n+2,2n+1)$-benzel and the $(n+2,2n)$-benzel in \cref{proposition:problem_12,proposition:problem_13}, respectively.

In \cref{SecPrelim}, we follow the procedure of Defant, Li, Propp, and Young \cite{DLPY} to convert benzel tilings to ribbon-tilings (more specifically 3-ribbon tilings), which have been studied by others (see for instance \cite{Pak}).
In \cref{SecCompress}, we establish the technique of compression in greater generality than in \cite{DLPY}.
In \cref{SubSecAztecTriangle}, as a warmup, we apply our more generalized compression technique, which converts 3-ribbon tilings into 2-ribbon tilings, to the $(n+2,2n+1)$-benzel and the $(n+2,2n)$-benzel, solving Problems 12 and 13 of \cite{Propp2022trimer}.
We use our compression technique to prove \cref{theorem:problem_5} in \cref{SecDecomposition}.
Finally, in \cref{SecConclusion}, we restate Problems~8 through~11 of \cite{Propp2022trimer} in their compressed forms and leave them as open questions, sharpening Problems~9 and~11 in the process.

\section{Preliminaries}\label{SecPrelim}
Following the conventions of \cite{DLPY}, we draw the square grid in the complex plane, where each square grid cell has center of the form $z=x+iy\in\Z[i]$ with $x+y$ odd, and the cell centered at $z$ has vertices of the form $z+i^j$ for $j\in\{0,1,2,3\}$.
To avoid confusion, hexagonal cells will continue to be called \emph{cells}, while square cells will be called \dfn{boxes}.
We refer to a specific box by identifying it with its center.
To leverage the existing literature for tilings of regions in the square grid, we follow the ``squarification'' procedure of \cite{DLPY} to convert tilings in the hexagonal grid to tilings in this square grid.
Note that the unit-width vertical strips $\{z\in\mathbb C:\frac{n-2}{2} \leq \Re z < \frac{n}{2}\}$ for $n \in 3\Z$ are traversed only by horizontal edges of the hexagonal grid.
See \cref{squarify} for an example of the squarification process applied to the $(9,11)$-benzel; the horizontal edges, contained in the aforementioned unit-width vertical strips, are highlighted in green in the leftmost pane.
\begin{figure}[htbp]
    \centering
    \includegraphics[width = \textwidth]{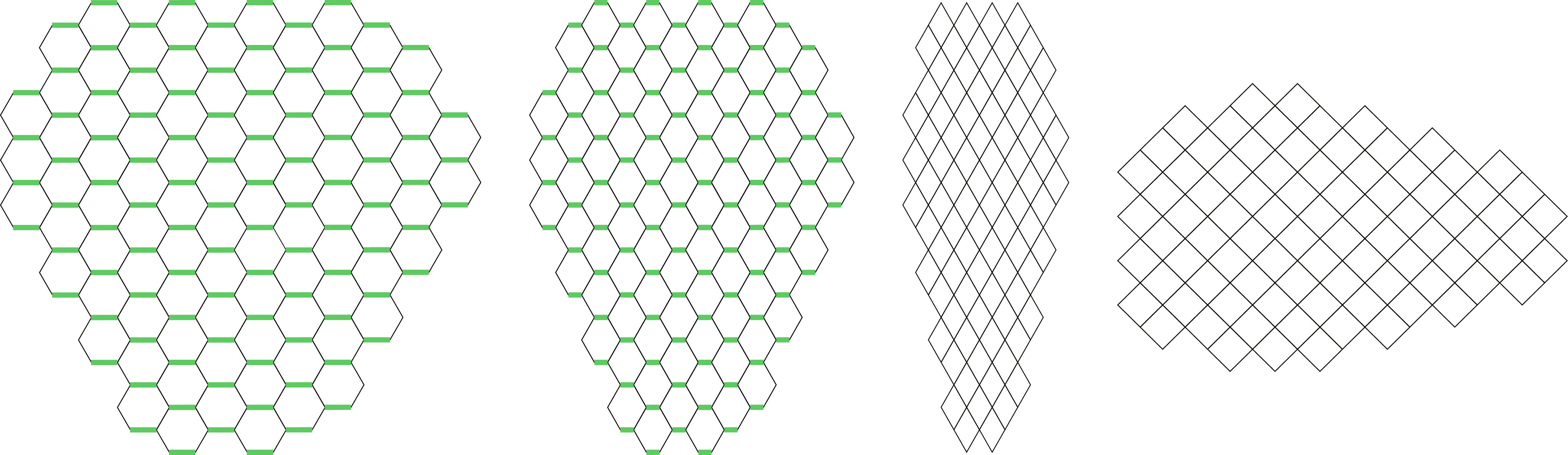}
    \caption{The squarification process applied to the (9,11)-benzel.}
    \label{squarify}
\end{figure}
Removing these strips and compressing the complex plane appropriately yields a bijection mapping hexagons to rhombuses in the resulting rhombic grid; see the third pane from the left in \cref{squarify}, where the second pane illustrates an intermediate state in this step.
Rotating this grid by 90$^\circ$ counterclockwise and rescaling the axes suitably transforms this rhombic grid to our aforementioned square grid.
Boxes with the same real part form a {\em column}.

The four prototiles other than the vertical bone (which we have excluded) appear in \cref{prototiles}; these prototiles become the four prototiles in \cref{square tiles} under squarification and rotation.
Because of the rotation process, the names of the prototiles are no longer well-suited; to resolve this issue, the authors of \cite{DLPY} introduced new names for the prototiles in the rotated setting: we rename the right stone, left stone, rising bone, and falling bone the \dfn{mountain stone}, \dfn{valley stone}, \dfn{negative bone}, and \dfn{positive bone}, respectively.
Recall that we excluded the vertical bone; this ensures that all squarified tiles are ribbon tiles, although our grid is rotated by $45^\circ$ relative to the usual pictures that appear in the literature on ribbon tilings.
Within our rotated square grid framework, a \dfn{ribbon} is a connected union of boxes in the square grid occupying consecutive columns, and a \dfn{$k$-ribbon} is a ribbon consisting of exactly $k$ boxes.
Our four remaining prototiles are precisely the four 3-ribbons.
Tilings of regions in the square grid using ribbons are referred to as \dfn{ribbon tilings}, and ribbon tilings only using $k$-ribbons are \dfn{$k$-ribbon tilings}.
\begin{figure}[htbp]
    \centering
    \includegraphics[width = 0.75\textwidth]{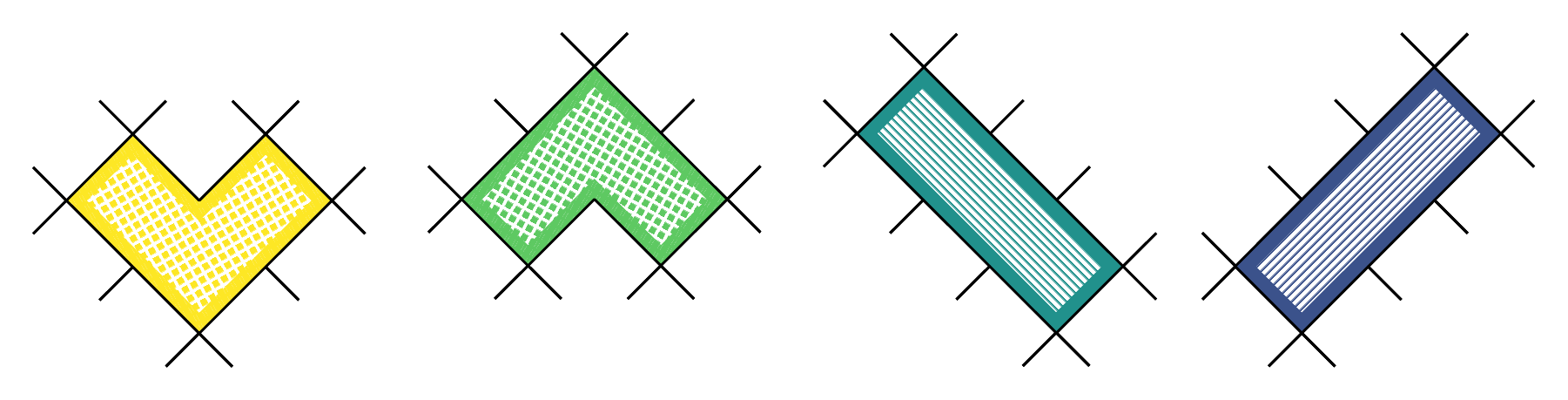}
    \caption{The four squarified tiles after $90^\circ$ rotation: the valley stone, mountain stone, negative bone, and positive bone.}
    \label{square tiles}
\end{figure}

An additional tool is the following factorization lemma, which we will use after applying the compression technique.
Recall that compression converts 3-ribbon tilings into 2-ribbon tilings, which are in bijection with perfect matchings of a graph.

\begin{lemma}\label{lemma:color_decomposition}
Let $G$ be a bipartite graph with vertex set $V$, with vertices colored black and white in such a way that every edge of $G$ joins vertices of opposite colors. Suppose also that $V = A \sqcup B$, where $A$ contains as many white vertices as black vertices and all edges joining $A$ and $B$ connect a white vertex in $A$ to a black vertex in $B$.
Then no perfect matching of $G$ contains an edge joining $A$ and $B$, so the number of perfect matchings of $G$ equals the product of the number of perfect matchings of the induced subgraph on $A$ and the number of perfect matchings of the induced subgraph on $B$.
\end{lemma}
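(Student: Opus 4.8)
The plan is to show directly that the orientation constraint on the edges crossing between $A$ and $B$ forces every such crossing edge to be unused by any perfect matching; once this is established, multiplicativity of the count follows immediately from the fact that a perfect matching of $G$ with no crossing edge restricts to perfect matchings of the induced subgraphs $G[A]$ and $G[B]$, and conversely.

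First I would fix an arbitrary perfect matching $M$ of $G$ and run a counting argument based on the coloring. Write $w_A$ and $b_A$ for the numbers of white and black vertices in $A$, so that by hypothesis $w_A = b_A$. The key observation is that, since every edge between $A$ and $B$ runs from a white vertex of $A$ to a black vertex of $B$, a black vertex of $A$ has no neighbor in $B$ whatsoever; hence its $M$-partner must lie in $A$, and (as $G$ is properly $2$-colored) that partner is white. Thus the $b_A$ black vertices of $A$ are matched by $M$ to $b_A$ distinct white vertices of $A$. Because $w_A = b_A$, these already exhaust every white vertex of $A$, so no white vertex of $A$ remains available to be matched across to $B$. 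As crossing edges can only emanate from white vertices of $A$, it follows that $M$ contains no edge between $A$ and $B$.

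With no crossing edge present, every vertex of $A$ is matched within $A$ and every vertex of $B$ is matched within $B$, so $M$ decomposes as the disjoint union of a perfect matching of $G[A]$ and a perfect matching of $G[B]$. Conversely, the union of any perfect matching of $G[A]$ with any perfect matching of $G[B]$ is a perfect matching of $G$, since it uses no crossing edges and covers every vertex exactly once. This establishes a bijection between the perfect matchings of $G$ and the pairs consisting of a perfect matching of $G[A]$ together with a perfect matching of $G[B]$, which yields the asserted product formula.

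I do not anticipate a serious obstacle: the argument is short, and the only point demanding care is the directionality of the crossing edges, namely that each one emanates from a white vertex of $A$ (equivalently, terminates at a black vertex of $B$). It is precisely this one-sidedness, in combination with the balance $w_A = b_A$, that prevents any white vertex of $A$ from reaching into $B$; without either assumption the conclusion can fail. One could recast the same mechanism as a Hall-type deficiency argument applied to the neighborhoods of the black vertices of $A$, but the direct counting above seems cleanest and most transparent.
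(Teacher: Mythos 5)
Your proof is correct and uses essentially the same counting argument as the paper: the balance $w_A=b_A$ together with the one-sidedness of the crossing edges forces every black vertex of $A$ to absorb a white vertex of $A$, leaving none free to cross. The paper phrases this as an equality between the number of white-$A$-to-black-$B$ matching edges and the number of black-$A$-to-white-$B$ matching edges (the latter being zero by hypothesis), but the mechanism is identical.
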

\begin{proof}
Every perfect matching of $G$ contains (up to) four kinds of edges, according to whether the white vertex lies in $A$ or $B$ and according to whether the black vertex lies in $A$ or $B$. 
Since $A$ has as many white vertices as black vertices, the number of edges joining a white vertex in $A$ to a black vertex in $B$ must equal the number of edges joining a black vertex in $A$ to a white vertex in $B$. But by assumption, the latter number is zero, so the former number must be zero as well.  
\end{proof}

\cref{lemma:color_decomposition} is a special case of a more general result that applies with $k$ color classes and $k$-ribbon tilings; for instance, the case $k=3$ appears in~\cite{DLPY}, from which the generalization to other $k$ is apparent.

\section{Compression}\label{SecCompress}
Compression is a streamlined version of the combinatorial transformation
that in~\cite{DLPY} was described in terms of the abacus bijection of Gordon James.
Under that earlier approach, we take the region we wish to tile by $k$-ribbons,
view it as a Young diagram, and disassemble the diagram into a core
and a quotient (a $k$-tuple of partitions, one of which turns out to be empty);
we then remove the empty partition from the quotient and obtain a
$(k-1)$-quotient that can be used to assemble a new Young diagram
whose $(k-1)$-ribbon tilings are in bijection with $k$-ribbon tilings of
the original Young diagram. Care is required when the core
of the original Young diagram is nonempty.

Our new approach does not involve cores or quotients. 
Note that the squarification process of \cref{squarify} results in a row-convex and column-convex polyomino that has been rotated by $45^\circ$.
Let $S$ denote such a rotated shape, 
which is to be tiled by $k$-ribbon tiles so that each $k$-ribbon
consists of boxes from $k$ consecutive columns. 
Let $r$ be the number of columns of $S$ containing one or more boxes, and index these columns as $1,\ldots,r$ from left to right.
Let $m_i$ denote the number of boxes in the $i$-th column of $S$, and let $M(x)=\sum_i m_i x^i$ be the corresponding generating function.
For each $k$-ribbon tiling $T$ of $S$ and for each 
$i\in\{1,\ldots,r\}$, let $p_i (T)$ be the number of $k$-ribbon tiles 
in $T$ whose leftmost box is in column $i$,
and let $P(x)=\sum_i p_i(T) x^i$ be the corresponding generating function. 

Every box in the $i$-th column belongs to a tile of $T$ whose leftmost
box belongs to some column between the $(i-k+1)$-th and the $i$-th, inclusive, so
$M(x) = (1+x+\dots+x^{k-1}) P(x)$. Since the polynomial $M(x)$ does
not depend on $T$, neither does $P(x)$,
so the coefficients $p_i (T)$ do not depend on $T$ either. Thus, we will henceforth write $p_i$ instead of $p_i(T)$. 

Arguments similar to those employed in the preceding paragraph were originally proposed by Pak, though without the use of generating functions;
see the paragraph that immediately follows the proof of Theorem 8.2 in~\cite{Pak}.
The requirement that $M(x) / (1+x+\dots+x^{k-1})$ be a
polynomial with nonnegative coefficients provides a criterion
for tileability that is stronger than the usual coloring
argument; for example, the region shown in \cref{fig:hexomino}
\begin{figure}[htbp]
    \centering
    \includegraphics[width=0.1124\linewidth]{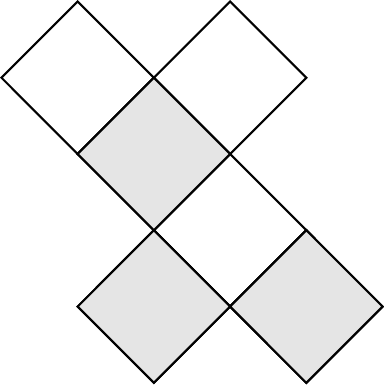}
    \caption{A color-balanced region that cannot be tiled by 2-ribbons.}
    \label{fig:hexomino}
\end{figure}
has equal numbers of shaded and unshaded boxes,
but since $(2x^1 + 1x^2 + 1x^3 + 2x^4) / (1 + x)$ equals $2x^1 - x^2 + 2x^3$
(a polynomial with a negative coefficient), the region does not admit a 2-ribbon (i.e., domino) tiling.

Our shape $S$ is vertically convex so that
all the boxes in the $i$-th column are corner-to-corner contiguous.
If $p_{i-k}$ and $p_{i}$ both vanish, then there does not exist a $k$-ribbon tile
whose rightmost box is in the $(i-1)$-th column or whose leftmost box is in
the $i$-th column. This implies that $m_{i-1} = m_{i}$, but it also implies more: if $T$ is a tiling of $S$ and $1\leq j\leq m_i$, then the tile of $T$ that contains the $j$-th box in column $i-1$ must also contain the $j$-th box in
column $i$ (where boxes in a column are indexed from top to bottom).

\begin{proposition}
\label{prop:compression}
Let $S$ be a horizontally and vertically convex shape, with the quantities $m_i$ and $p_i$ as defined above. 
Suppose there is an integer $i_0$ such that $p_{i}=0$ for all $i\equiv i_0\pmod{k}$. 
Define a smaller (``compressed'') shape $S'$ by identifying the $m_i$ boxes in column $i$ with the respective $m_i$ boxes in column $i-1$ from top to bottom, for every $i\equiv i_0\pmod{k}$.
Then the $k$-ribbon tilings of $S$ are in bijection with the $(k-1)$-ribbon tilings of $S'$.
\end{proposition}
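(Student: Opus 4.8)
The plan is to build an explicit, reversible bijection that collapses each merge column $i\equiv i_0\pmod k$ onto its left neighbor $i-1$, turning every $k$-ribbon into a $(k-1)$-ribbon, and then to check that this collapse is well defined and invertible. If $S$ admits no $k$-ribbon tiling the claim is vacuous, so I assume it does. First I would record the structural input. Since $M(x)=(1+x+\cdots+x^{k-1})P(x)$ forces $p_1=m_1>0$, column $1$ is never a merge column, so every merge column satisfies $2\le i\le r$. Fix such an $i\equiv i_0\pmod k$; because $i-k\equiv i\equiv i_0\pmod k$, both $p_{i-k}$ and $p_i$ vanish, so the paragraph preceding the proposition gives $m_{i-1}=m_i$ and, in every tiling $T$, the tile containing the $j$-th box of column $i-1$ also contains the $j$-th box of column $i$. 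Writing box centers as $(x,y)$ with $x+y$ odd and adjacency $(x,y)\sim(x\pm1,y\pm1)$, the contiguity of the two columns together with this straight-across pairing forces the heights in column $i$ to be a uniform vertical translate of those in column $i-1$: there is a sign $\epsilon_i\in\{+1,-1\}$, depending only on $S$, such that the $j$-th box of column $i$ sits one unit above ($\epsilon_i=+1$) or below ($\epsilon_i=-1$) the $j$-th box of column $i-1$, for all $j$ at once. (If the two directions could occur for different $j$, a box of column $i$ would be forced strictly between two contiguous boxes, contradicting vertical convexity.)

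Next I would define the forward map. Any $k$-ribbon of $T$ occupies $k$ consecutive columns, exactly one of which, say $c$, is $\equiv i_0\pmod k$; since the leftmost column $j$ of the ribbon has $p_j>0$ we have $j\not\equiv i_0\pmod k$, so $c\ne j$ and $c$ lies strictly to the right of the ribbon's left end. Hence the ribbon has a box in both columns $c-1$ and $c$, and by the locking property it passes straight across the seam between them. Collapsing column $c$ onto $c-1$ identifies those two boxes into one, producing a tile occupying $k-1$ columns with a single box in each; applying a bookkeeping shift of $-\epsilon_c$ to the heights of all columns to the right of $c$ restores the standard adjacency $(x,y)\sim(x\pm1,y\pm1)$ across the new seam, so the collapsed tile is a genuine $(k-1)$-ribbon. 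Carrying out all collapses simultaneously (merge columns are spaced $k\ge2$ apart, hence non-interfering) sends $T$ to a family $T'$ of pairwise-disjoint $(k-1)$-ribbons covering the compressed shape $S'$, that is, a $(k-1)$-ribbon tiling of $S'$.

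Finally I would construct the inverse and verify mutual inversion. Re-expanding $S'$ by splitting each collapsed column back into the pair $(i-1,i)$ and undoing the height shifts recovers $S$. In $S'$ the collapsed columns are evenly spaced $k-1$ apart, so each $(k-1)$-ribbon meets exactly one of them; splitting that column duplicates the corresponding box and re-inserts a single straight step, turning the $(k-1)$-ribbon back into a $k$-ribbon occupying $k$ consecutive columns of $S$. The two operations agree box-by-box, so they are mutually inverse, giving the claimed bijection.

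The hard part will be the validity check inside the forward map, namely confirming that collapsing a column and shifting the heights on its right genuinely sends ribbons to ribbons and yields a legitimate shape $S'$. The clean input that makes this work is the uniform sign $\epsilon_i$: because the two locked columns are exact vertical translates of one another, the seam can be closed by a single global height shift rather than by anything that bends a tile. Establishing that uniformity from vertical convexity and the non-crossing of ribbons is the one place where the coordinate geometry of the rotated grid genuinely enters, and it is where I would spend the most care.
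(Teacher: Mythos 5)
Your proof is correct and follows essentially the same route as the paper, whose entire argument is the one-line observation that each $k$-ribbon crosses exactly one seam straight across, so identifying the paired boxes turns it into a $(k-1)$-ribbon. The extra details you supply (the uniform offset $\epsilon_i$, the height-shift realizing $S'$ geometrically, and the explicit inverse) are sound elaborations of what the paper leaves implicit.
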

\begin{proof}
Every $k$-ribbon tile must contain paired boxes from two paired columns, and by identifying the two boxes, we turn the $k$-ribbon tile into a $(k-1)$-ribbon tile.
\end{proof}

Before returning to benzels, we illustrate the procedure with an example taken from an article by Chen and Kargin~\cite{ChenKargin}.
The left part of \cref{fig:ck} shows (a rotated version of) Figure 3 from their article.
We have \[M(x)=3 x^1 + 3 x^2 + 4 x^3 + 3 x^4 + 3 x^5 + 4 x^6 + 3 x^7 + 3 x^8 + 4 x^9 + 3 x^{10} + 3 x^{11},\]
and the polynomial \[P(x) = 3 x^1 + 1 x^3 + 2 x^4 + 2 x^6 + 1 x^7 + 3 x^9\]
does not have any terms with exponents congruent to 2 modulo~$3$. Hence we may identify the boxes in columns 1 and 2, columns 4 and 5, columns 7 and 8, and columns 10 and 11 using the ``sutures'' that are shown,
obtaining the region shown in the right part of \cref{fig:ck}.
\begin{figure}[h!]
\centering
\includegraphics[width=0.8\textwidth]{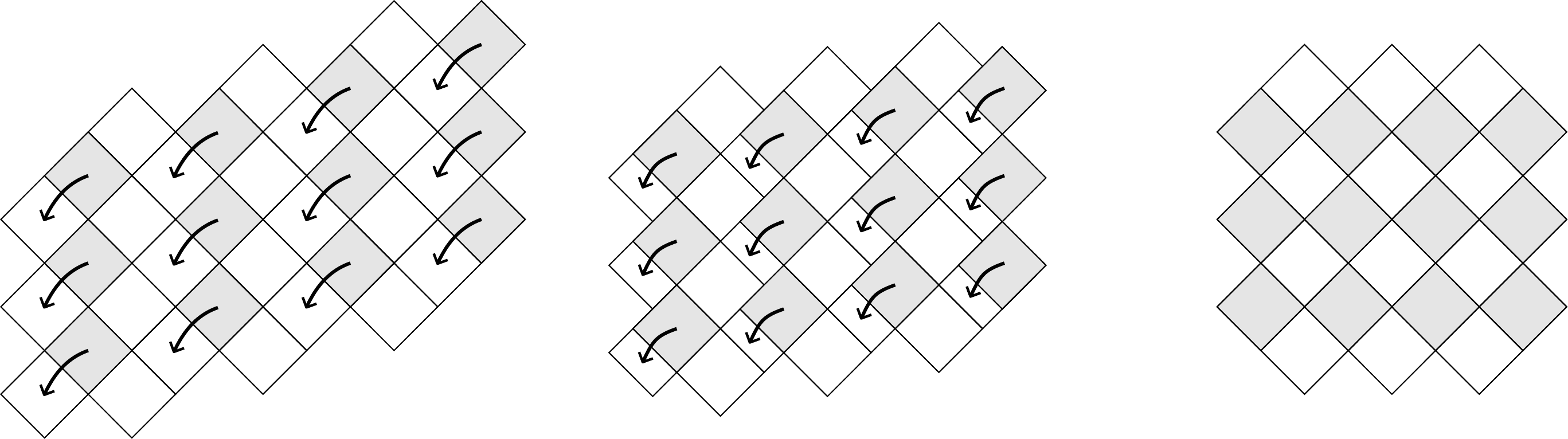}
\caption{Compressing a region into an Aztec diamond.}
\label{fig:ck}
\end{figure}
Therefore the 3-ribbon tilings of the region ($S$) on the left
are equinumerous with the 2-ribbon tilings of the region ($S'$) on the right, which is the Aztec diamond of order 3.
Indeed,~\cite[Theorem~2.5]{ChenKargin} can be proved using compression.
(The bijection used in Chen and Kargin's proof appears to be the same as ours, but we have not verified this.)

We now consider the region obtained from the $(a,b)$-benzel
via contraction of horizontal edges, squarification, and rotation, as described in \cref{SecPrelim}.
When $S$ is this region obtained from the $(a,b)$-benzel, write the polynomials $M(x)$ and $P(x)$ as $M_{a,b}(x)$ and $P_{a,b}(x)$, respectively.
Then $M_{a,b}(x)=(1+x+x^2)P_{a,b}(x)$,
where the polynomial $P_{a,b}(x)$ has a structure
that depends in a simple way on the parameters $a$ and $b$,
with different behavior according to $a+b$ (mod 3).
For $1 \leq i \leq k$, let \[Q_{i,k}(x)=\sum_{m=0}^{i-1}(m+1)x^m+\sum_{m=i}^{k-1}ix^m=1 x^0 + 2 x^1 + 3 x^2 + \dots + i x^{i-1} + i x^{i} + \dots + i x^{k-1}\]
and let $R_{i,k}(x)=x^{k-1}Q_{i,k}(1/x)$ be the polynomial of degree $k-1$ obtained from $Q_{i,k}(x)$ by reversing the order of the coefficients. For example, we have \[Q_{3,5}(x) = 1 + 2x + 3x^2 + 3x^3 + 3x^4\quad\text{and}\quad R_{3,5}(x) = 3 + 3x + 3x^2 + 2x^3 + x^4.\]
Also, for $1 \leq i,j \leq k$, let 
\begin{align*}
T_{i,j,k}(x)&=\sum_{m=0}^{k-i}(i+m)x^m+\sum_{m=k-i+1}^{2k-i-j}(2k-i-m)x^m \\ 
&=i x^0 + (i+1) x^1 + \dots + (k-1) x^{k-i-1} + k x^{k-i} + (k-1) x^{k-i+1} + \dots + j x^{2k-i-j}.
\end{align*}
For example, \[T_{1,2,4}(x) = 1 + 2x + 3x^2 + 4x^3 + 3x^4 + 2x^5.\]

\begin{lemma}
\label{lemma-polynomial}
Fix integers $a,b \geq 1$ with $a \leq 2b$ and $b \leq 2a$. Let $\gamma$ be the unique element of $\{1,2,3\}$ such that $a+b\equiv\gamma\pmod 3$, and let $s=(2a-b-3+\gamma)/3$ and $t=(2b-a-3+\gamma)/3$. Then 
\[P_{a,b}(x)=\begin{cases}
        x^3 T_{s+1,t+1,(a+b-1)/3}(x^3) & \text{if }\gamma=1, \\
        x R_{s,(a+b-2)/3}(x^3) + x^2 Q_{t,(a+b-2)/3}(x^3) & \text{if }\gamma=2, \\ 
        x R_{s,(a+b-3)/3}(x^3) + x^3 Q_{t,(a+b-3)/3}(x^3) & \text{if }\gamma=3.
    \end{cases}\]
\end{lemma}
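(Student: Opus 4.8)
The plan is to reduce the lemma to the already-established factorization $M_{a,b}(x) = (1+x+x^2)P_{a,b}(x)$ from \cref{SecCompress}, so that it suffices to understand the column heights of the squarified benzel. Writing $d_i = m_i - m_{i-1}$ for the successive differences of the column heights (with $m_0 = 0$) and using $\tfrac{1}{1+x+x^2} = \tfrac{1-x}{1-x^3}$, the factorization yields the explicit formula $p_i = \sum_{j\geq 0} d_{i-3j}$; equivalently, $p_i - p_{i-3} = d_i$, so within each residue class modulo $3$ the sequence $(p_i)$ is the running total of the increments $d_i$. Since the definitions show that $Q_{i,k}$, $R_{i,k}$, and $T_{i,j,k}$ are exactly the polynomials whose coefficient sequences have increment patterns $(+1^{\,i-1},0^{\,k-i})$, $(0^{\,k-i},-1^{\,i-1})$, and $(+1^{\,k-i},-1^{\,k-j})$ respectively, the whole lemma becomes the assertion that the difference sequence $d_i$ of the squarified benzel, sorted into its three residue classes modulo $3$, accumulates to precisely these unimodal shapes.

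First I would carry out the geometric computation of the column heights $m_i$ directly from the squarification procedure of \cref{SecPrelim}. The $(a,b)$-benzel is cut out by a hexagon whose six sides have directions and lengths controlled by $a$ and $b$; after contracting the period-$3$ family of vertical strips traversed only by horizontal edges and rotating by $45^\circ$, the boundary of $S$ becomes a piecewise-linear path whose breakpoints are the images of the hexagon's vertices. Consequently $d_i$ is piecewise constant, taking values in a short list (I expect $\{-1,0,+1\}$ on the active residue classes) on consecutive ranges of columns delimited by these breakpoints. I would record, side by side, the top and bottom boundary box of each column $i$, thereby reading off $m_i$ and hence $d_i$ on each range. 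The residues of the breakpoint columns modulo $3$ — which is exactly where the trichotomy $a+b \equiv \gamma \pmod 3$ originates, since the period-$3$ contraction meets the corners differently in each case — force the split into $\gamma \in \{1,2,3\}$ and pin down the constants $s$ and $t$.

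With $d_i$ in hand I would accumulate within residue classes via $p_i = \sum_{j\geq 0} d_{i-3j}$ and match against the claimed formulas. In the case $\gamma = 1$ I expect the increments to accumulate onto a single residue class, producing the unimodal shape $x^3 T_{s+1,t+1,(a+b-1)/3}(x^3)$ and leaving the other two classes identically zero; in the cases $\gamma = 2$ and $\gamma = 3$ two classes survive, each accumulating to an $R$- or a $Q$-shape, and I would check that the peak location, the two endpoints, and the prefactors $x$, $x^2$, $x^3$ line up with $s$ and $t$ as stated. The bookkeeping here is routine once the profile of $d_i$ is known, amounting to verifying that partial sums of $+1$'s, $0$'s, and $-1$'s reproduce the coefficient lists defining $Q$, $R$, and $T$.

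The main obstacle is the geometric step: extracting the exact column heights of the squarified benzel, and in particular locating the corner columns precisely and tracking their residues modulo $3$. An off-by-one error at a corner would corrupt the peak index and the boundary values of the $Q$/$R$/$T$ polynomials, so this is where care is essential. To guard against such errors I would validate the formulas against the worked $(9,11)$-benzel of \cref{squarify} (which has $a+b \equiv 2 \pmod 3$, testing the case $\gamma = 2$) together with one small instance in each of the remaining residue classes, confirming both the support modulo $3$ and the explicit coefficient values before asserting the general identity.
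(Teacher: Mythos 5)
Your overall strategy is sound and is genuinely different from the paper's. You reconstruct $P_{a,b}$ from the column-height generating function via $P(x)=M(x)\cdot\frac{1-x}{1-x^3}$, i.e.\ $p_i=\sum_{j\geq 0}d_{i-3j}$ with $d_i=m_i-m_{i-1}$, and then observe that $Q$, $R$, and $T$ are exactly the polynomials whose coefficient sequences arise as such running totals of simple increment patterns; this reduces the lemma to computing the column profile of the squarified benzel from scratch. The paper instead short-circuits that geometric computation entirely: it quotes the explicit boundary word of the $(a,b)$-benzel from Kim--Propp, notes that contraction, squarification, and rotation preserve the combinatorial description of that word, and invokes the method of Kim--Propp's Theorem~1. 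Your route is more self-contained and makes transparent \emph{why} the answer is built from the unimodal shapes $Q$, $R$, $T$ (they are accumulations of $\pm1$ steps coming from the convex boundary); the paper's route avoids redoing any geometry at the cost of an external citation. If you want to lighten your geometric step, you could graft the Kim--Propp boundary word into your framework: it hands you the breakpoints and slopes of the boundary, hence the sequence $d_i$, without re-deriving the corner locations.

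That said, as written your proposal is a strategy rather than a proof: the entire content of the lemma lives in the column-height computation that you defer (``I would carry out the geometric computation\dots''), including the delicate residue-class bookkeeping at the corners that produces the trichotomy in $\gamma$ and the values of $s$ and $t$. Until that is executed in all three cases, nothing has been verified beyond examples. One small correction to your expectations: the increments $d_i$ are \emph{not} confined to $\{-1,0,+1\}$ at the extreme columns --- e.g.\ when $\gamma=2$ the coefficient of $x^1$ in $P_{a,b}$ is $s$, forcing $d_1=m_1=s$ --- so your accumulation formula must be applied with the actual boundary values rather than assuming unit steps throughout. This does not break the framework (the formula $p_i=\sum_{j\ge0}d_{i-3j}$ is agnostic to the size of the $d_i$), but it is precisely the kind of off-by-one/boundary issue you rightly flag as the main hazard.
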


\noindent
(The lemma contains more information than is required for this article; in order to apply compression, all we need to know is the existence of suitable polynomials $Q$, $R$, and $T$, not their precise forms. However, we include the formulas here as they might be useful to future researchers.)

\begin{proof}
Let us assume $\gamma=3$; a similar argument handles the other two cases. Recall (see \cite{KimPropp}) that the boundary of the $(a,b)$-benzel is given by the word
$$(\bb,\ba',\bb,\bc')^s (\bc,\ba',\bb,\ba')^t
  (\bc,\bb',\bc,\ba')^s (\ba,\bb',\bc,\bb')^t
  (\ba,\bc',\ba,\bb')^s (\bb,\bc',\ba,\bc')^t$$
where $\ba,\bb,\bc,\ba'\bb'\bc'$ are the unit vectors
pointing from $0$ to $1,\omega,\omega^2,-1,-\omega,-\omega^2$, respectively
(where $\omega$ is a primitive 3rd root of unity as earlier),
with $s = (2a-b)/3$ and $t = (2b-a)/3$.
When we apply the compression, squarification, and rotation operations described in \cref{SecPrelim},
two of these vectors shrink away and the remaining four change,
but the combinatorial description remains the same,
so the method
used to prove \cite[Theorem~1]{KimPropp} applies here as well.
\end{proof}

We are mostly interested in the case in which $a+b \equiv 2\pmod{3}$, but remarks on the other two cases are in order.

When $a+b \equiv 1 \pmod{3}$, the coefficient of $x^i$ in $P(x)$ vanishes for every $i \equiv 1,2 \pmod{3}$.
In this case, the bijection in \cref{prop:compression}
can be used twice to let us put 3-ribbon tilings
of the region in question in bijection with
1-ribbon tilings of a reduced region,
but every region has exactly one 1-ribbon tiling
(and in fact the uncompressed tiling is composed exclusively of right stones).

The case in which $a+b \equiv 0 \pmod{3}$ is subtler.
Here the coefficient of $x^i$ in $P(x)$ vanishes
whenever $i \equiv 2$ (mod 3),
so we can apply compression (once) to put 3-ribbon tilings
of the region in question in bijection with
2-ribbon tilings of a reduced region.
These 2-ribbon tilings correspond to perfect matchings
of graphs like the one shown in \cref{fig:0mod3}.
Here we set $s=(2a-b)/3$ and $t=(2b-a)/3$.
(We have flipped the graph across a horizontal axis 
in \cref{fig:0mod3} so as to be more consistent with
\Cref{fig:2mod3}, the analog for the case $a+b\equiv2\pmod{3}$;
the flip of course has no effect on the number of perfect matchings.)
Edges colored teal correspond to right/mountain stones, edges colored yellow correspond to left/valley stones,
and black edges correspond to bones.
Thus, forbidding right (respectively, left) stones in the stones-and-bones tiling of an $(a,b)$-benzel
corresponds to forbidding teal (respectively, yellow) edges in the associated perfect matching.

\begin{figure}[h!]
\centering
\includegraphics[width=0.3\linewidth]{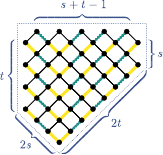}
\caption{The graph dual to the compressed $(8,10)$-benzel. In this example, we have $s=2$ and $t=4$.}
\label{fig:0mod3}
\end{figure}

Finally, we have the case in which $a+b \equiv 2\pmod{3}$, which is relevant to \cref{theorem:problem_5}.
Here, the coefficient of $x^i$ in $P(x)$ vanishes
whenever $i \equiv 0$ (mod 3),
so we can apply compression (once) to put 3-ribbon tilings
of the region in question in bijection with
2-ribbon tilings of a reduced region.
These 2-ribbon tilings correspond to perfect matchings
of graphs like the one shown in \cref{fig:2mod3},
where the ``sutures'' of this compression process,
along with an example 3-ribbon tiling that compresses to a 2-ribbon tiling,
are shown in \cref{fig:suture_2mod3}.
Here, we set $s=(2a-b-1)/3$ and $t=(2b-a-1)/3$.
As in the previous case,
edges colored teal correspond to right stones
while edges colored yellow correspond to left stones.

\begin{figure}[htbp]
\centering
\includegraphics[width=0.3\linewidth]{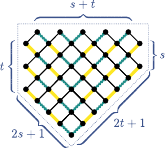}
\caption{The graph dual to the compressed $(8,9)$-benzel. In this example, we have $s=2$ and $t=3$.}
\label{fig:2mod3}
\end{figure}

\begin{figure}[htbp]
    \centering
    \includegraphics[width=0.9\textwidth]{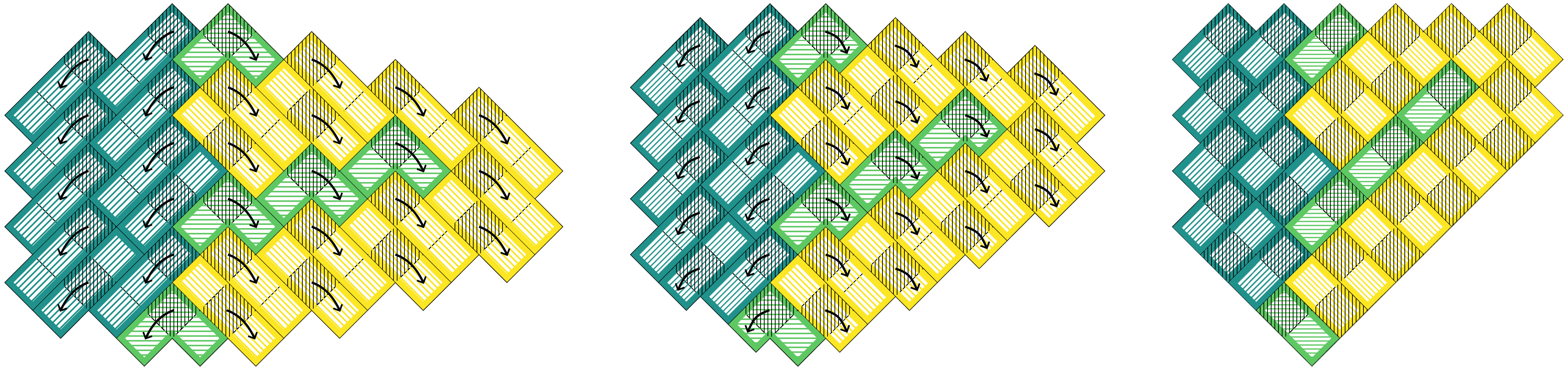}
    \includegraphics[width=0.28\textwidth]{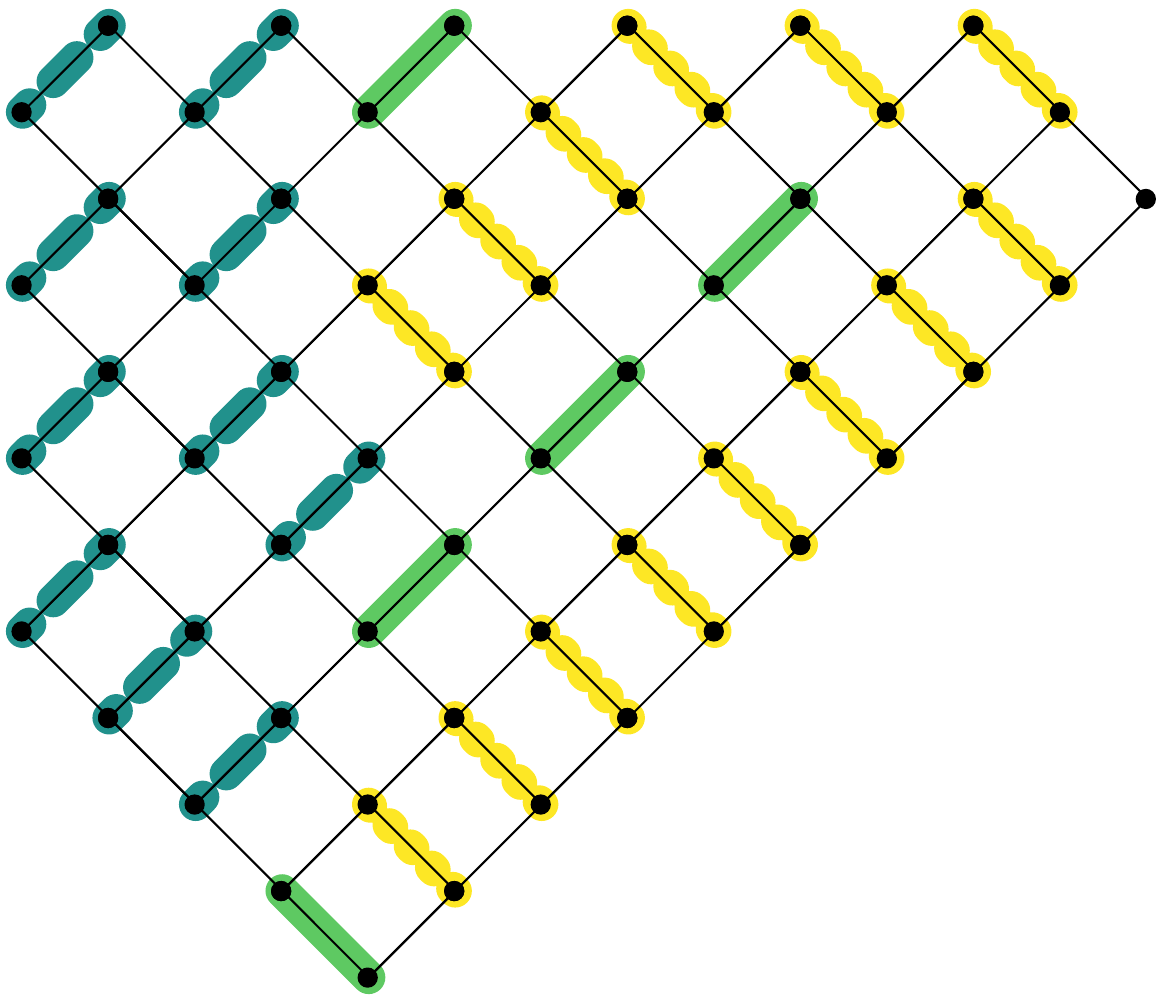}
    \caption{Compressing the $(8,9)$-benzel with an example 3-ribbon tiling to obtain a 2-ribbon tiling, or equivalently a perfect matching, below.}
    \label{fig:suture_2mod3}
\end{figure}

\subsection{Two examples}\label{SubSecAztecTriangle}
We now provide two simple examples of compression, which yield proofs of two conjectures of Propp \cite[Problems 12 and 13]{Propp2022trimer}.
For both examples, we consider tilings of benzels using both stones and two types of bones; we assume without loss of generality that the vertical bone is forbidden. Since both left and right stones are permitted, these tilings after compression correspond to perfect matchings of \cref{fig:0mod3} that use edges yellow and teal edges as well as black edges.

The following result, which resolves Problem 12 of \cite{Propp2022trimer}, states that the number of tilings of the $(n+2,2n+1)$-benzel by left stones, right stones, rising bones, and falling bones is the $n$-th large Schr\"oder number (see \cite[Sequence~A006318]{oeis}). 

\begin{proposition}\label{proposition:problem_12}
    Let $T_n$ denote the number of tilings of the $(n+2,2n+1)$-benzel by left stones, right stones, rising bones, and falling bones (with the convention $T_0=1$). Then
    \begin{equation}\label{eq:problem_12}
        \sum_{n=0}^\infty T_n x^n = \frac{1-x-\sqrt{1-6x+x^2}}{2x}.
    \end{equation}
\end{proposition}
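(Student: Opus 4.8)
The plan is to use the compression machinery already established to replace the trihex tiling count by a perfect-matching count, and then to peel the resulting graph recursively so as to recover the defining functional equation of the large Schr\"oder numbers.

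First I would record that $a+b=(n+2)+(2n+1)=3(n+1)\equiv 0\pmod 3$, so \cref{lemma-polynomial} (with $\gamma=3$) puts us in the case where the coefficient of $x^i$ in $P_{a,b}(x)$ vanishes for every $i\equiv 2\pmod 3$. Hence \cref{prop:compression} applies once and identifies the $3$-ribbon tilings of the squarified benzel with the $2$-ribbon tilings, i.e.\ the perfect matchings, of the dual graph of \cref{fig:0mod3}, where now $s=(2a-b)/3=1$ and $t=(2b-a)/3=n$. Because left stones, right stones, and both bones are all permitted, every edge (teal, yellow, and black) is available, so $T_n$ is exactly the total number of perfect matchings of this graph; call it $G_n$. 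The remaining task is purely graph-theoretic: show that the number of perfect matchings of $G_n$ has generating function \eqref{eq:problem_12}, which is consistent with the convention $T_0=1$.

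The key step is a first-return decomposition of the matchings of $G_n$. The large Schr\"oder generating function $R(x)$ of \eqref{eq:problem_12} is the unique power-series solution of $R=1+xR+xR^2$, so it suffices to exhibit a decomposition realizing the recurrence $T_n=T_{n-1}+\sum_{k=0}^{n-1}T_k\,T_{n-1-k}$ for $n\geq 1$. Fixing attention on the thin $s=1$ end of the triangle-like region $G_n$, I would condition on how the distinguished boundary vertex there is matched. One family of choices removes a single rank and leaves a graph isomorphic to $G_{n-1}$, contributing the linear term $xR$; the complementary family forces an edge that separates $G_n$ into two induced subgraphs isomorphic to $G_k$ and $G_{n-1-k}$ for some $0\leq k\leq n-1$, contributing the convolution term $xR^2$. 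Summing over the two families yields the functional equation, and hence \eqref{eq:problem_12}. (Equivalently, one can build a weight-preserving bijection between matchings of $G_n$ and large Schr\"oder paths of semilength $n$, with the empty path, level steps, and up--down returns matching the three terms of $R=1+xR+xR^2$.)

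I expect the main obstacle to be pinning down the geometry of the decomposition precisely: identifying the correct distinguished vertex, verifying that the two families of local matching choices are exhaustive and disjoint, and---most delicately---checking that the splitting choice genuinely disconnects $G_n$ into two smaller members of the same family $\{G_m\}$, so that the two induced matchings are independent and the counts multiply. Getting the indexing of $s$ and $t$ to line up so that the pieces are again of this form, and handling the stone edges near the apex of the triangle, is where the care lies; once the decomposition is verified, the passage to the functional equation and then to \eqref{eq:problem_12} is routine.
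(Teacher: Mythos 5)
Your reduction is exactly the paper's: $a+b=3(n+1)\equiv 0\pmod 3$, so compression applies once with $s=(2a-b)/3=1$ and $t=(2b-a)/3=n$, and since all four prototiles are allowed, $T_n$ is the total number of perfect matchings of the graph in \cref{fig:0mod3} with these parameters --- the Aztec triangle of order $n$. Up to this point the argument is correct and coincides with what the paper does.

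The gap is in the second half. The convolution term $xR^2$ in your functional equation requires that, after conditioning on how the distinguished boundary vertex is matched, the rest of the graph genuinely falls apart into two vertex-disjoint pieces isomorphic to $G_k$ and $G_{n-1-k}$ whose matchings can be chosen independently, so that the counts multiply. The Aztec triangle is a genuinely two-dimensional region (its vertex count grows quadratically in $n$), and deleting the endpoints of a single forced edge near the thin end does not disconnect it: the remainder stays connected, so there is no product structure to extract, and no evident local decomposition realizes the recurrence $T_n=T_{n-1}+\sum_{k}T_kT_{n-1-k}$. The first-return decomposition of Schr\"oder paths does not transport to a geometric cut of the region; under the usual matchings-to-paths correspondences the ``first return'' is a global feature of the matching, not a separating edge of the graph. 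Your parenthetical alternative --- a weight-preserving bijection between matchings of $G_n$ and large Schr\"oder paths of semilength $n$ --- is the honest route, but it is precisely the content of the theorem you would be reproving, and you have not constructed it. The paper closes exactly this gap by invoking Ciucu's result \cite[Theorem 4.1]{Ciucu}, which identifies the number of perfect matchings of the Aztec triangle of order $n$ as the $n$-th large Schr\"oder number; you should either cite that result as the paper does or carry out the bijection in full, which is substantially more work than the ``routine'' verification your sketch anticipates.
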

\begin{proof}
    Using the characterization of benzels with $a+b\equiv0\pmod{3}$ from \cref{SecCompress}, we have $s=1$ and $t=n$.
    Using these parameters in the schematic of \cref{fig:0mod3}, we see by applying compression that our stones-and-bones tilings (with the vertical bone forbidden) of the $(n+2,2n+1)$-benzel correspond to perfect matchings of the Aztec triangle of order $n$; see \cref{fig:T_n1} for the case $n=4$, identical to \cite[Figure 13]{Ciucu}. Ciucu \cite[Theorem 4.1]{Ciucu} proved that the number of such perfect matchings is the $n$-th large Schr\"oder number, which completes the proof.
\end{proof} 

\begin{figure}[htbp]
        \centering
        \includegraphics[width=0.145\linewidth]{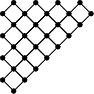}
        \caption{The Aztec triangle of order 4.}
        \label{fig:T_n1}
    \end{figure}

Next we build on \cref{proposition:problem_12} to solve Problem 13 of \cite{Propp2022trimer}.
The relevant enumeration sequence is sequence A006319 in \cite{oeis}.

\begin{proposition}\label{proposition:problem_13}
    Let $T_n'$ denote the number of tilings of the $(n+2,2n)$-benzel by left stones, right stones, rising bones, and falling bones (with the convention $T_0'=1$).
Then
    \begin{equation}\label{eq:problem_13}
        \sum_{n=0}^\infty T_n' x^n = (1-x)\frac{1-x-\sqrt{1-6x+x^2}}{2x}.
    \end{equation}
\end{proposition}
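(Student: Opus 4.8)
The plan is to reduce \cref{proposition:problem_13} to \cref{proposition:problem_12} via the factor $(1-x)$. Writing $F(x)=\frac{1-x-\sqrt{1-6x+x^2}}{2x}=\sum_{n\geq 0}T_n x^n$ for the large Schr\"oder generating function established in \cref{proposition:problem_12}, the claimed identity \eqref{eq:problem_13} is exactly $\sum_{n\geq0}T_n'x^n=(1-x)F(x)$. Comparing coefficients, this is equivalent to the two statements $T_0'=1$ and $T_n'=T_n-T_{n-1}$ for all $n\geq1$. Since $T_0'=1$ holds by convention, it suffices to prove the recurrence $T_n=T_n'+T_{n-1}$, which expresses the $n$-th large Schr\"oder number as the sum of the matching count for the $(n+2,2n)$-benzel and the $(n-1)$-st large Schr\"oder number.

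First I would set up the compressed graph for the $(n+2,2n)$-benzel. Here $a=n+2$ and $b=2n$, so $a+b=3n+2\equiv2\pmod3$, and by \cref{lemma-polynomial} we are in the case $\gamma=2$ with parameters $s=(2a-b-1)/3=1$ and $t=(2b-a-1)/3=n-1$. Applying \cref{prop:compression} (with the vanishing class $i_0\equiv0\pmod3$) converts the $3$-ribbon tilings of the squarified $(n+2,2n)$-benzel into $2$-ribbon tilings, equivalently perfect matchings of the graph $G_n'$ dual to the compressed region, as schematized in \cref{fig:2mod3}; thus $T_n'=\#\mathrm{PM}(G_n')$. In parallel, \cref{proposition:problem_12} identifies $T_n$ with $\#\mathrm{PM}(\mathrm{AT}_n)$, where $\mathrm{AT}_n$ is the Aztec triangle of order $n$ arising from the $(n+2,2n+1)$-benzel ($s=1$, $t=n$).

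The crux is a single vertex-splitting argument inside $\mathrm{AT}_n$, building directly on \cref{proposition:problem_12}. Because $\mathrm{AT}_n$ comes from the parameter $t=n$ while $G_n'$ comes from $t=n-1$, these two graphs occupy the same corner geometry and differ only by the outermost layer; I would locate a distinguished corner vertex $v$ of $\mathrm{AT}_n$ of degree $2$, with neighbors $w_1$ and $w_2$. Every perfect matching of $\mathrm{AT}_n$ pairs $v$ with $w_1$ or with $w_2$, and deleting the matched pair splits the matchings into two classes. I would show that $\mathrm{AT}_n\setminus\{v,w_1\}$ is isomorphic to $G_n'$ and that $\mathrm{AT}_n\setminus\{v,w_2\}$ is isomorphic to $\mathrm{AT}_{n-1}$; summing the two classes then yields $T_n=\#\mathrm{PM}(G_n')+\#\mathrm{PM}(\mathrm{AT}_{n-1})=T_n'+T_{n-1}$, as required. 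The color-balance bookkeeping of \cref{lemma:color_decomposition} can be used to confirm that neither deletion strands an unmatchable fragment.

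The main obstacle I anticipate is making these two graph isomorphisms precise: one must read off the exact local structure of the dual graphs near the relevant corner from the compression sutures (cf.\ \cref{fig:0mod3,fig:2mod3}) and verify that forcing each of the two edges at $v$ reproduces the $t=n-1$ graph in one case and the order-$(n-1)$ Aztec triangle in the other. Once the combinatorics of that single corner is pinned down, the recurrence, and hence the generating function \eqref{eq:problem_13}, follows immediately from \cref{proposition:problem_12}. As a fallback, if the direct vertex-splitting identification proves awkward, one could instead recognize $G_n'$ among Ciucu's family of Aztec-triangle-type regions \cite{Ciucu} and cite the corresponding matching enumeration, since the sequence counting the $T_n'$ sits in that family alongside the large Schr\"oder numbers.
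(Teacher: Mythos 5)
Your proposal is correct and follows essentially the same route as the paper: reduce to the recurrence $T_n'=T_n-T_{n-1}$, use compression (with $s=1$, $t=n-1$) to identify $T_n'$ with the number of perfect matchings of the order-$n$ Aztec triangle with its bottom corner ``square'' removed, and then split the perfect matchings of the order-$n$ Aztec triangle according to how that corner is matched. The one point needing correction is your claim that $\mathrm{AT}_n\setminus\{v,w_2\}$ is isomorphic to $\mathrm{AT}_{n-1}$ --- it cannot be, since it has only two fewer vertices than $\mathrm{AT}_n$; rather, matching $v$ away from the corner triggers a cascade of forced edges along the boundary, and it is only after deleting all of these forced pairs that the order-$(n-1)$ Aztec triangle remains, which still yields the count $T_{n-1}$ exactly as in the paper's proof.
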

\begin{proof}
    Comparing \eqref{eq:problem_13} with \eqref{eq:problem_12}, we see that it suffices to prove that $T_n'=T_n - T_{n-1}$.
    Using the characterization of benzels with $a+b\equiv2\pmod{3}$ from \cref{SecCompress}, we have $s=1$ and $t=n-1$.
    Using these parameters in the schematic of \cref{fig:2mod3}, we see by applying compression that our stones-and-bones tilings (with the vertical bone forbidden) of the $(n+2,2n)$-benzel correspond to perfect matchings of a shape obtained by removing one ``square'' from the bottom corner of the Aztec triangle of order $n$
    (that is, removing the two lower-left-most vertices and the three edges they participate in).
    See the left pane of \cref{fig:T_n2} for the case $n=4$.
    \begin{figure}[htbp]
        \centering
        \includegraphics[width=0.4646\linewidth]{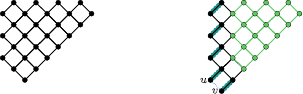}
        \caption{The graph dual to the compressed $(6,8)$-benzel and its relation to the Aztec triangle of order 4.}
        \label{fig:T_n2}
    \end{figure}
    Adding this square back in amounts to adding two new vertices $u$ and $v$, so we see that perfect matchings of this shape correspond to perfect matchings of the Aztec triangle of order $n$ that have these two vertices matched (i.e., perfect matchings that use the blue dotted edge $(u,v)$ in \cref{fig:T_n2}).
    Since the augmented graph (the Aztec triangle of order $n$) has $T_n$ perfect matchings,
    it suffices to show there are $T_{n-1}$ perfect matchings of the Aztec triangle of order $n$ that do not use the edge $(u,v)$.
    After removing pairs of vertices that must be matched to each other (shaded in teal in \cref{fig:T_n2}), the resulting shape (drawn in green in \cref{fig:T_n2}) is the Aztec triangle of order $n-1$.
    Thus, there are indeed $T_{n-1}$ such perfect matchings.
\end{proof}

\section{Decomposition}\label{SecDecomposition}
We now prove \cref{theorem:problem_5}.
Let us fix integers $k\geq 0$ and $n\geq 1$ with $(k,n)\neq(0,1)$, and let $(a,b)=(n+3k,2n+3k-1)$.
As $a+b\equiv2\pmod{3}$, the parameters $s$ and $t$ from \cref{SecCompress} are $s=(2a-b-1)/3=k$ and $t=(2b-a-1)/3=n+k-1$.
As we are only allowed to use right stones, rising bones, and falling bones, compression converts our tiling problem into a perfect matching problem of the form of \cref{fig:2mod3}, where the edges corresponding to left stones in \cref{fig:2mod3} (drawn in yellow) are forbidden; see \cref{fig:decomposition_A} for a visual example of the resulting graph after removing these edges. There are a number of forced matches, as marked in green in \cref{fig:decomposition_A}.
After removing these vertices, we obtain a graph as in \cref{fig:decomposition_B}.

\begin{figure}[htbp]
    \centering
    \includegraphics[width=0.35\linewidth]{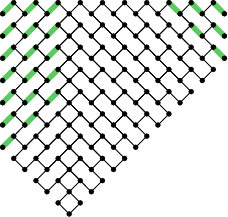}
    \caption{The graph dual to the compressed $(14,18)$-benzel, with edges corresponding to left stones removed.
    Forced matches are marked in green.}
    \label{fig:decomposition_A}
\end{figure}

\begin{figure}[htbp]
    \centering
    \includegraphics[width=0.3158\textwidth]{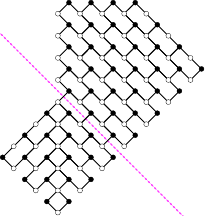}
    \caption{The graph dual to the compressed $(14,18)$-benzel, with forced matches removed. A bipartition of the vertices is indicated via a black-and-white coloring. A pink dotted line separates the graph into two pieces as in \cref{lemma:color_decomposition}.}
    \label{fig:decomposition_B}
\end{figure}

Consider the pink dotted line in \cref{fig:decomposition_B}. By deleting the edges that cross this dotted line, we break the graph into two connected components. Let $A$ be the connected component on the southwest side of the dotted line and $B$ be the connected component on the northeast side; see \cref{fig:decomposition_C}, where $A$ appears on the left and $B$ appears on the right.
\begin{figure}[htbp]
    \begin{center}
    \includegraphics[width=0.1791\textwidth]{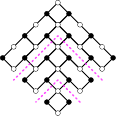} \qquad\qquad\qquad\includegraphics[width=0.2304\linewidth]{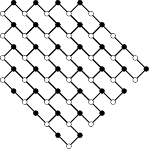}
    \end{center}
    \caption{The two pieces of the compressed $(14,18)$-benzel, with black-and-white bipartitions of the vertices. In the piece on the left, pink dotted lines separate the graph into three $\mathsf{\Lambda}$-shaped pieces to which we can apply \cref{lemma:color_decomposition}.}
    \label{fig:decomposition_C}
\end{figure}
We have colored the vertices black and white in a checkerboard fashion, so that each edge is incident to one black and one white vertex.
Note that $A$ contains as many white vertices as black vertices, and likewise for $B$. All edges joining $A$ and $B$ (i.e., edges that cross the dotted line) connect a white vertex in $A$ to a black vertex in $B$.
Thus, by \cref{lemma:color_decomposition}, the number of perfect matchings of the full graph is the product of the number of perfect matchings of $A$ and the number of perfect matchings of $B$. 

We first enumerate the perfect matchings of $A$.
One can observe that vertically reflecting $A$ yields the same graph as that obtained by applying compression to the shape $\lambda_s$ from \cite{DLPY}, where right stones, i.e., mountain stones, are forbidden; this vertical reflection corresponds to our reflection of \cref{fig:0mod3} to align with \cref{fig:2mod3}.
See \cite[Figure 11]{DLPY} for a visual aid for this compression process (the compression amounts to removing the green columns in that figure).
Working in the uncompressed environment, Defant, Li, Propp, and Young \cite[Proposition 6.2]{DLPY} proved that the number of perfect matchings of $A$ is ${(2s)!!= (2s)(2s-2) \cdots (4)(2) = 2^s s!}$.
Let us briefly sketch the proof of this enumerative result, but in our compressed environment. We can apply \cref{lemma:color_decomposition} to split $A$ along the dotted lines in \cref{fig:decomposition_C}, just as we split the original graph along the dotted line in \cref{fig:decomposition_B}.
(As noted after \cref{lemma:color_decomposition}, this is where, working in the uncompressed environment, the article \cite{DLPY} used a 3-color variant of \cref{lemma:color_decomposition}.)
It then suffices to show that the $i$-th smallest of these ``$\mathsf{\Lambda}$-shapes'' (the article \cite{DLPY} used the term ``$\mathsf{V}$-shape'' in the reflected setting) has $2i$ perfect matchings.
It is straightforward to see that exactly one of the $2i$ edges that are ``perpendicular to the $\mathsf{\Lambda}$'' (i.e., the $2i-2$ edges in the interior and the 2 edges at the two tips of the $\mathsf{\Lambda}$-shape) must be used in a perfect matching, and this choice determines the rest of the perfect matching.

Finally, recalling that $s=k$ and dividing $(2k)!!$ from the expression in \cref{theorem:problem_5}, we find that it remains to show that the number of perfect matchings of $B$ is the quantity
\begin{equation}\label{eq:B1}
    J_{k,n}=\prod_{j=1}^k \frac{(2j-1)!(2j+2n-2)!}{(j+n-1)!(j+n+k-1)!}.
\end{equation}
We can deform the ``brickwork'' pattern of $B$ into a hexagonal grid. There is a well-known bijection between perfect matchings of this hexagonal grid and plane partitions; see \cite{YoungPyramid} for a discussion of this correspondence from brickwork to hexagonal grid to plane partitions.
Specifically, perfect matchings of $B$ correspond to plane partitions of the staircase shape
\[ (t-s,t-s-1,\dots,1)=(n-1,n-2,\dots,1) \]
with parts no larger than $s=k$.
\begin{figure}[htbp]
    \begin{center}
    \includegraphics[width = 0.9\textwidth]{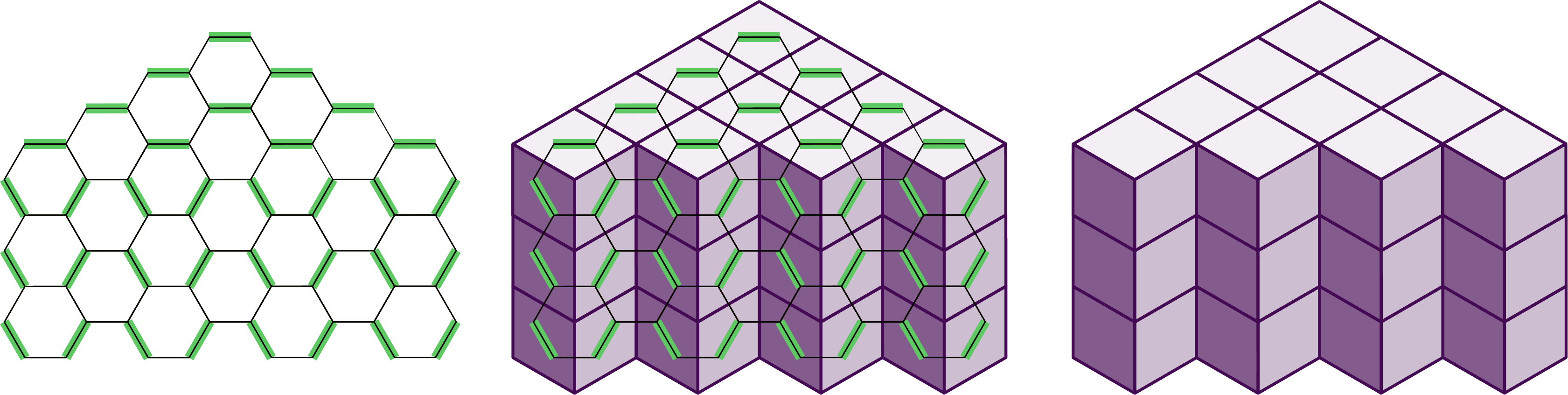}
    \end{center}
    \caption{The $B$ component of the $(14,18)$-benzel, deformed into a hexagonal grid with its maximal matching, and the corresponding plane partition.}
    \label{fig:plane_partition}
\end{figure}
\cref{fig:plane_partition} shows an example of this correspondence for the $(14,18)$-benzel, the same example as in \cref{fig:decomposition_C}; on the left, we show the hexagonal grid obtained by deforming $B$, along with an example matching, in the middle we show how this matching becomes a plane partition, and on the right we show this plane partition by itself.
This matching is the maximal plane partition, which contains the plane partitions corresponding to all of the other matchings.
In our example of the $(14,18)$-benzel, where $s=3$ and $t=7$, this maximal plane partition is the staircase shape $(4,3,2,1)$ with parts all of size 3.
Proctor \cite[Corollary 4.1]{Proctor} found that the number of such plane partitions is the quantity 
\begin{align}\label{eq:B2}
    K_{k,n}=\prod_{i=1}^{n-1}\left(\frac{k+i}{i}\prod_{j=2}^i \frac{2k+i+j-1}{i+j-1}\right)
    &= \prod_{1 \leq i < j \leq n}\frac{2k+i+j-1}{i+j-1}
\end{align}
(see also \cite[Exercise 7.101a]{EC2}). 
To prove that the quantities in \eqref{eq:B1} and \eqref{eq:B2} are equal, one can induct on $k$.
Note that they are trivially equal for $k=0$. Moreover, 
\begin{align*}
    \frac{J_{k,n}}{J_{k-1,n}}=\frac{(2k-1)!(2k+2n-2)!}{(n+2k-1)!(n+2k-2)!},
\end{align*}
while 
\begin{align*}
    \frac{K_{k,n}}{K_{k-1,n}}&=\prod_{1\leq i < j \leq n}\frac{2k+i+j-1}{2k+i+j-3} \\ 
    &= \prod_{i=1}^{n-1} \frac{(2k+i+n-1)!(2k+2i-3)!}{(2k+2i-1)!(2k+i+n-3)!}
    \\ &= \prod_{i=1}^{n-1}\frac{(2k+i+n-1)(2k+i+n-2)}{(2k+2i-1)(2k+2i-2)}
    \\ &= \frac{(2k+2n-2)!(2k+2n-3)!(2k-1)!}{(2k+n-1)!(2k+n-2)!(2k+2n-3)!}
    \\ &= \frac{(2k-1)!(2k+2n-2)!}{(n+2k-1)!(n+2k-2)!}.
\end{align*}
This completes the proof by induction and concludes the proof of \cref{theorem:problem_5}.

\section{Conclusion and Open Problems}\label{SecConclusion}
Propp \cite[Problems 8 to 13]{Propp2022trimer} stated a number of open questions concerning the number of stones-and-bones tilings of benzels that use all stones and bones except the vertical bone.
\cref{proposition:problem_12,proposition:problem_13} addressed Problems 12 and 13.
Using our compression technique, we restate Problems 8 to 11 as perfect matching problems and leave them as open questions, hoping that their simplified form will facilitate further progress with these problems. 

Problem 8 concerns the $(3n,3n)$-benzel. Referencing \cref{fig:0mod3}, we find the graph dual of the compressed $(3n,3n)$-benzel is as depicted in \cref{fig:prob8}, which shows the case $n=3$.
\begin{figure}[htbp]
\centering
\includegraphics[width=0.308\linewidth]{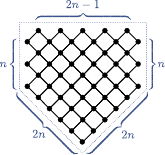}
\caption{The graph dual to the $(9,9)$-benzel.}
\label{fig:prob8}
\end{figure}
\begin{conjecture}[\protect{\cite[Problem 8]{Propp2022trimer}}]
    Let $T_n$ denote the number of stones-and-bones tilings of the $(3n,3n)$-benzel in which the vertical bone is forbidden.
    We have
    \[ \frac{T_n T_{n+2}}{T_{n+1}^2} = \frac{256(2n+3)^2(4n+1)(4n+3)^2(4n+5)}{27(3n+1)(3n+2)^2(3n+4)^2(3n+5)} \]
    for all $n\geq 1$. 
\end{conjecture}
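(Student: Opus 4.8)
The plan is to turn the tiling count into a perfect-matching count and then extract the stated quadratic relation from a closed-form enumeration of those matchings. Since the $(3n,3n)$-benzel has $a+b\equiv0\pmod 3$ with $s=t=n$, I would first invoke \cref{prop:compression} to identify $T_n$ with the number $M(G_n)$ of perfect matchings of the graph $G_n$ of \cref{fig:prob8} (the $a+b\equiv0$ schematic of \cref{fig:0mod3} specialized to $s=t=n$). Because both stones are permitted here, all three edge colors are present, so unlike in the proof of \cref{theorem:problem_5} there is no immediate forced-matching collapse. The symmetric case $s=t$ does, however, endow $G_n$ with a reflective symmetry inherited from the benzel (the $(a,b)$- and $(b,a)$-benzels being mirror images, which coincide when $a=b$).

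The heart of the argument is to obtain a closed product formula for $M(G_n)$. My primary route is structural: exploit the reflective symmetry of $G_n$ via a factorization theorem for graphs with a symmetry axis (in the spirit of Ciucu's work) to write $M(G_n)$ as a product, up to a power of $2$, of the matching numbers of two ``half-graphs,'' and then identify each half-graph as the dual of a region enumerating a symmetry class of plane partitions with a known $\Gamma$-function product formula. The pattern of linear factors in the conjectured ratio---steps of $2$ and $4$ in the numerator ($2n+3$, $4n+1$, $4n+3$, $4n+5$) against steps of $3$ in the denominator ($3n+1$, $3n+2$, $3n+4$, $3n+5$), with constant $256/27=2^8/3^3$---is exactly the fingerprint of such a symmetry-class product, so the real task is to pin down which class and to verify the dictionary between $G_n$ and its region. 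An alternative route is to apply Kuo's graphical condensation directly to $G_n$, choosing four boundary vertices so that deleting them (after cancelling forced matches) returns a copy of $G_m$ for $m$ near $n$; a color-balance argument in the spirit of \cref{lemma:color_decomposition} would then be used to show that one of the two terms in Kuo's identity vanishes, leaving a single-term relation among the $M(G_m)$ that is precisely a quadratic recurrence of the desired shape.

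Once a product formula (or the single-term condensation relation) is in hand, verifying the conjecture is routine. Writing $T_n$ as a hypergeometric product and forming $\frac{T_n T_{n+2}}{T_{n+1}^2}$, one reduces the claim to an elementary identity among Pochhammer symbols, carried out exactly as in the induction establishing $J_{k,n}=K_{k,n}$ at the end of \cref{SecDecomposition}. Equivalently, one checks that $u_n:=T_{n+1}/T_n$ is the hypergeometric term with $u_{n+1}/u_n$ equal to the displayed rational function, which is immediate from the product.

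The main obstacle is the middle step, and it is the reason the problem remains open. In the factorization route, correctly matching each half-graph to an existing plane-partition or alternating-sign-matrix enumeration---and tracking the power of $2$ that the factorization contributes---is delicate, and it is entirely possible that no off-the-shelf formula applies, in which case one must prove a new matching evaluation (for instance via a Lindström--Gessel--Viennot determinant evaluated by the methods of advanced determinant calculus). In the condensation route, the difficulty is twofold: arranging the deletions so that the four subgraphs nest as consecutive $G_m$ up to forced matches, and accounting for those forced matches, since it is precisely their contribution that must reproduce the exact rational prefactor of the conjecture. Either way, the genuine content lies in recognizing the combinatorial object whose product formula underlies $T_n$; the quadratic relation itself is then a formal consequence.
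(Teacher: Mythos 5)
This statement is left as an open conjecture in the paper: the only content the paper supplies is the first step of your plan, namely applying \cref{prop:compression} to recast $T_n$ as the number of perfect matchings of the graph in \cref{fig:prob8} (the $a+b\equiv 0\pmod 3$ schematic of \cref{fig:0mod3} with $s=t=n$). Your proposal reproduces that reduction correctly, but everything after it is a survey of strategies rather than a proof, and you say so yourself: the ``middle step'' --- an actual closed-form evaluation of $M(G_n)$, or an actual single-term condensation recurrence --- is never carried out. That step is the entire mathematical content of the problem; without it there is nothing to feed into the routine Pochhammer manipulation you describe at the end, so the proposal has a genuine and essential gap rather than a fixable omission.

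Two more specific cautions about the routes you sketch. First, the reflective symmetry you want to exploit is the symmetry exchanging the $(a,b)$- and $(b,a)$-benzels, but the squarification and compression pipeline is not symmetric in $a$ and $b$ (it singles out one family of vertical strips and one residue class of columns), so it is not clear that $G_n$ inherits a symmetry axis in the form Ciucu's factorization theorem requires; this would need to be verified on the graph itself, not inferred from the benzel. Second, in the Kuo condensation route, the identity is a two-term relation among six matching numbers, and you would need both to exhibit four boundary vertices whose deletions collapse (after forced matches) to genuine copies of $G_m$ for nearby $m$ and to prove the vanishing of one term; a parity count in the spirit of \cref{lemma:color_decomposition} is a plausible mechanism but is asserted, not checked. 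Note also that even a successful condensation argument yields only the recurrence for $T_nT_{n+2}/T_{n+1}^2$, which is in fact all the conjecture asks for --- so the condensation route, if it works, would be more economical than hunting for a full product formula; but as written neither route is executed, and the statement remains open in the paper as well.
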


Problem 9 concerns the $(3n+1,3n+1)$-benzel. Referencing \cref{fig:2mod3}, we find the graph dual of the compressed $(3n+1,3n+1)$-benzel is as depicted in \cref{fig:prob9}, which shows the case $n=3$.
\begin{figure}[htbp]
\centering
\includegraphics[width=0.35\linewidth]{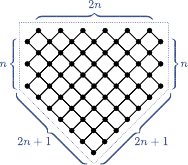}
\caption{The graph dual to the $(10,10)$-benzel.}
\label{fig:prob9}
\end{figure}
Propp \cite{Propp2022trimer} empirically observed that for small $n$, the number of such stones-and-bones tilings had no prime factor greater than or equal to $4n$, suggesting the enumeration could have a nice factored form.
However, obtaining data for large values of $n$ was difficult.
Knowing now how to recast the problem in terms of dimers, we were able to use existing technology for enumerating perfect matchings (specifically the determinant method of Kasteleyn~\cite{Kasteleyn}) to obtain more data. This allows us to supplant Propp's original Problem~9 with the following more precise conjecture. 
\begin{conjecture}[c.f. \protect{\cite[Problem 9]{Propp2022trimer}}]
Let $T_n$ denote the number of stones-and-bones tilings of the $(3n+1,3n+1)$-benzel in which the vertical bone is forbidden.
We have 
\[ \frac{T_n}{T_{n-1}} = \frac{2^{2n}(4n-1)!(4n-2)!n!}{(3n)!(3n-1)!(3n-2)!} \]
for all $n\geq 2$.
\end{conjecture}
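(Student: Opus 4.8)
The plan is to convert the conjecture into a planar matching count and then bring to bear the classical machinery for product-formula enumeration of perfect matchings. By \cref{prop:compression}, the stones-and-bones tilings of the $(3n+1,3n+1)$-benzel in which the vertical bone is forbidden (but both stones and both remaining bones are allowed) are in bijection with the perfect matchings of the dual graph $G_n$ of the compressed benzel, namely the graph of \cref{fig:2mod3} with $s=t=n$ (drawn for $n=3$ in \cref{fig:prob9}); thus $T_n=M(G_n)$, where $M(\cdot)$ denotes the number of perfect matchings. The structural feature I would exploit is that $a=b$, so the benzel is symmetric under reflection across the real axis. This reflection preserves the pair of stones while interchanging left with right stones and rising with falling bones, and after squarification, rotation, and compression it descends to a reflective symmetry of $G_n$ that swaps the teal and yellow edge classes. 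First I would pin down this symmetry axis precisely and verify that $G_n$, equipped with its natural black/white bipartition, satisfies the hypotheses of Ciucu's factorization theorem for perfect matchings of graphs with reflective symmetry.

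Granting this, the core step is to apply Ciucu's factorization to write $M(G_n)=2^{c_n}\,M(G_n^+)\,M(G_n^-)$ for an explicit nonnegative integer $c_n$ and two ``half-graphs'' $G_n^{\pm}$ obtained by cutting $G_n$ along its axis. The hope is that each half-graph is a region whose matchings are counted by a known product formula of MacMahon--Proctor type, in the spirit of the proof of \cref{theorem:problem_5}, where forbidding one stone type left a staircase plane-partition region counted by \cite{Proctor}. Here both stone types survive, so I expect the half-graphs to be somewhat more elaborate (trapezoidal, or hexagonal with a staircase defect) but still tractable via the Lindström--Gessel--Viennot lemma. Concretely, I would realize the matchings of each $G_n^{\pm}$ as families of non-intersecting lattice paths, express each count as a determinant of binomial coefficients, and evaluate that determinant in closed form by standard determinant-calculus identities. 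Multiplying the two evaluations and simplifying should present $T_n$ as an explicit product, after which the conjectured ratio $T_n/T_{n-1}$ follows by cancellation.

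A second, more self-contained route, useful as a cross-check or fallback, is to establish the recurrence directly via Kuo condensation. One would embed $G_n$ into a family of graphs, select four boundary vertices, and write the condensation identity relating $M(G_n)$ to products of matching numbers of vertex-deleted subgraphs, arranging the four vertices so that every subgraph that appears is again a member of the family (or of one or two closely related auxiliary families). This would yield a closed recurrence; one then checks that the conjectured product satisfies it and verifies the base case (the $(4,4)$-benzel, $n=1$) to complete an induction.

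The hard part, in either route, is the loss of the clean color decomposition that made \cref{theorem:problem_5} tractable. Because both teal and yellow edges are present, \cref{lemma:color_decomposition} no longer severs a single plane-partition piece, so neither the Ciucu half-graphs nor the associated lattice-path determinant is guaranteed to be a textbook region or a previously evaluated determinant; the decisive and most delicate step will be identifying the half-graphs $G_n^{\pm}$ explicitly and carrying out the corresponding determinant evaluation (or, in the condensation approach, finding a choice of the four vertices for which the recurrence genuinely closes). The numerical evidence already gathered through Kasteleyn's determinant method \cite{Kasteleyn} --- in particular the empirical absence of large prime factors in $T_n$ --- is strong evidence that such a product-formula evaluation exists and should guide the search for the correct half-graphs.
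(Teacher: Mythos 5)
This statement is not proved in the paper at all: it appears in \cref{SecConclusion} as an open conjecture, which the authors obtained by recasting Problem~9 of \cite{Propp2022trimer} as a perfect-matching problem (the graph of \cref{fig:prob9}, i.e.\ \cref{fig:2mod3} with $s=t=n$) and then gathering numerical data via Kasteleyn's method. The only part of your proposal that overlaps with what the paper actually establishes is that same reduction $T_n = M(G_n)$; everything after that is a research plan whose decisive steps you explicitly leave unexecuted. You do not identify the half-graphs $G_n^{\pm}$, verify that $G_n$ meets the hypotheses of Ciucu's factorization theorem (in particular how the symmetry axis meets vertices and edges, which is exactly what that theorem is sensitive to), evaluate any determinant, or exhibit a choice of four vertices for which Kuo condensation closes within a family. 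As you yourself note, the absence of the color decomposition of \cref{lemma:color_decomposition} is precisely what makes this case hard, and your proposal does not overcome it. So this is a genuine gap --- indeed, the entire content of a proof is missing --- and the statement remains open.

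One smaller inaccuracy worth flagging: you assert that the reflection symmetry of the $(3n+1,3n+1)$-benzel ``interchanges left with right stones'' and hence swaps the teal and yellow edge classes of $G_n$. The paper's convention is the opposite: reflection across the real axis preserves each stone type individually and permutes the bones (swapping rising and falling bones, fixing the vertical bone). This does not affect the applicability of Ciucu's theorem to an unweighted graph, but it would matter if you tried to exploit the symmetry at the level of the stones-and-bones tiling problem itself, and it suggests the symmetry axis of $G_n$ may not sit where you expect. Your two proposed routes (symmetry factorization plus Lindstr\"om--Gessel--Viennot, or Kuo condensation) are both reasonable directions for attacking the conjecture, but as written the proposal is a plan of attack, not a proof.
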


Problem 10 yields the pattern depicted in \cref{fig:prob10}, which shows the case $n=3$. Similarly to Problem 8, it also has a conjectured formula for the second quotient.
\begin{figure}[htbp]
\centering
\includegraphics[width=0.388\linewidth]{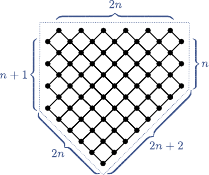}
\caption{The graph dual to the $(10,11)$-benzel.}
\label{fig:prob10}
\end{figure}
\begin{conjecture}[\protect{\cite[Problem 10]{Propp2022trimer}}]
    Let $T_n$ denote the number of stones-and-bones tilings of the $(3n+1,3n+2)$-benzel where the vertical bone is forbidden.
    We have 
    \[ \frac{T_n T_{n+2}}{T_{n+1}^2} = \frac{65536(2n+3)(2n+5)^2(2n+7)(4n+3)(4n+5)^2(4n+7)^2(4n+9)^2(4n+11)}{729(3n+2)(3n+4)^3(3n+5)^2(3n+7)^2(3n+8)^3(3n+10)} \]
    for all $n\geq1$.
\end{conjecture}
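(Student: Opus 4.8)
The plan is to follow the two-stage strategy that proved \cref{theorem:problem_5} and \cref{proposition:problem_12}: first use compression to replace the tiling count $T_n$ by a perfect-matching count, and then evaluate that count in closed form. Since $(3n+1)+(3n+2)=6n+3\equiv 0\pmod 3$, \cref{lemma-polynomial} puts us in the regime treated by \cref{fig:0mod3}, so \cref{prop:compression} applies once and identifies $T_n$ with the number of perfect matchings of the graph $G_n$ of that type with parameters $s=n$ and $t=n+1$; this is precisely the family drawn in \cref{fig:prob10} (the case $n=3$ being the $(10,11)$-benzel). Because left stones, right stones, and both non-vertical bones are all allowed, every edge is present (teal for right stones, yellow for left stones, black for bones), so---unlike in \cref{theorem:problem_5}---no color class is forbidden and no edges are deleted.

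The remaining task is a closed product formula for $M(G_n):=T_n$; granting such a formula, the conjectured identity is the routine second-difference calculation of $T_nT_{n+2}/T_{n+1}^2$, of the same flavor as the induction equating \eqref{eq:B1} and \eqref{eq:B2}. Because only the second quotient is currently conjectured, an intermediate step is to use additional Kasteleyn data together with the known ratio to guess the full product form for $T_n$. I would then attack the formula on two fronts simultaneously. Combinatorially, as in the $B$-component analysis of \cref{SecDecomposition}, I would deform the brickwork graph to a hexagonal grid and search for a lozenge-tiling region, or a symmetry class of plane partitions, whose enumeration is governed by a hook-content or Proctor-type product \cite{Proctor,EC2}. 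Algebraically, I would write $T_n=\lvert\det K_n\rvert$ for a Kasteleyn matrix $K_n$ \cite{Kasteleyn} (the method already used to gather data for Problem~9) and try to realize the $K_n$ as nested minors of a single structured matrix, so that the relation $T_nT_{n+2}=R(n)\,T_{n+1}^2$ emerges from a Desnanot--Jacobi / Kuo-condensation identity in which the unwanted cross term degenerates.

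The main obstacle is exactly what distinguishes this problem from the solved cases: both stone types are permitted, so teal and yellow edges coexist and $G_n$ is not a pure hexagonal-grid region. The clean brickwork-to-plane-partition bijection that resolved component $B$ in \cref{SecDecomposition} therefore does not transfer, and there is no obvious box or trapezoid whose MacMahon-type formula produces $T_n$. Moreover, the conjectured ratio tends to $2^{36}/3^{18}=(4/3)^{18}\neq 1$ as $n\to\infty$, forcing $\log T_n$ to carry a genuine quadratic term and hence $T_n\sim\gamma^{n^2}$ with $\gamma=(4/3)^9$; thus $T_n$ is the partition function of a genuinely two-dimensional dimer model rather than a one-parameter family of plane partitions. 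I expect the crux to be the identification of the correct solvable model---either the exact lozenge/domino region (with its symmetry constraint) encoded by $G_n$, or a condensation scheme in which $G_n$, $G_{n+1}$, and $G_{n+2}$ appear as compatible subgraphs. Once such a structure is available, confirming the explicit degree-$12$ rational function on the right-hand side should reduce to a finite factorial manipulation of the kind carried out at the end of \cref{SecDecomposition}.
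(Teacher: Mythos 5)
There is a fundamental mismatch here: the statement you are trying to prove is left \emph{open} in the paper. The authors do not prove Problem 10; their entire contribution to it is the compression step, i.e., recasting the tiling count $T_n$ as the number of perfect matchings of the graph in \cref{fig:prob10} (the case $a+b\equiv 0\pmod 3$ of \cref{fig:0mod3} with $s=n$, $t=n+1$), after which they explicitly ``leave [it] as an open question.'' Your first paragraph reproduces that reduction correctly, including the parameter values, so up to that point you match the paper exactly. Everything after that, however, is a research plan rather than a proof, and the plan's central ingredient is missing: you never produce a closed product formula for $T_n$, nor do you exhibit the Kuo-condensation scheme or the solvable lozenge/plane-partition model you hope exists. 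The phrase ``granting such a formula, the conjectured identity is the routine second-difference calculation'' concedes the whole difficulty, since the second quotient cannot even be set up without a candidate for $T_n$, and you list \emph{guessing} that candidate as a step still to be done.

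To your credit, you correctly diagnose why the techniques that succeed elsewhere in the paper fail here: with both stone colors allowed, no edges are deleted, so the graph does not reduce to a pure hexagonal-grid region and the brickwork-to-plane-partition bijection of \cref{SecDecomposition} does not apply; and your asymptotic check that the conjectured ratio tends to $2^{36}/3^{18}=(4/3)^{18}\neq 1$, forcing $\log T_n$ to grow quadratically, is a genuine (and correct) structural observation that rules out a one-parameter plane-partition answer. But identifying obstacles is not overcoming them. As it stands, your proposal proves nothing beyond what the paper already establishes (the dimer reformulation), and the conjecture remains exactly as open after your argument as before it.
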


Problem 11 yields the pattern depicted in \cref{fig:prob11}, which shows the case $n=3$. As for Problem 9, Propp \cite{Propp2022trimer} empirically observed that for small $n$, the number of such stones-and-bones tilings had no prime factor greater than or equal to $4n$, suggesting the enumeration could have a nice factored form.
As was the case for Problem 9, recasting Problem 11 as a question about perfect matchings allowed us to obtain a conjectural formula.
\begin{figure}[htbp]
\centering
\includegraphics[width=0.347\linewidth]{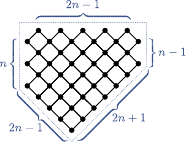}
\caption{The graph dual to the $(8,9)$-benzel.}
\label{fig:prob11}
\end{figure}
\begin{question}[\protect{\cite[Problem 11]{Propp2022trimer}}]
Let $T_n$ denote the number of stones-and-bones tilings of the $(3n-1,3n)$-benzel in which the vertical bone is forbidden.
We have 
\[ \frac{T_n}{T_{n-1}} = \frac{2^{2n-3}(4n-2)!(4n-4)!(n-1)!!(n-3)!!}{(3n-1)!!(3n-2)!(3n-3)!(3n-5)!!}\]
for all $n\geq 2$. 
\end{question}

There are probably more applications of the central idea behind compression. Consider (as a trivial example) the problem of using straight $n$-ominoes (that is, 1-by-$n$ rectangles) to tile an $(n+1)$-by-$(n+1)$ square from which the central $(n-1)$-by-$(n-1)$ square has been removed. One way to prove that there are only two tilings is to note that the $n-1$ middle boxes on each side of the big square must belong to the same $n$-omino and so can be compressed into a single box, reducing the problem to that of using dominoes to tile a 3-by-3 square from which the central box has been removed. In this case, the compression can be achieved geometrically, but all that is required for purposes of enumeration is that it can be done topologically. 

More broadly, given a tiling problem in which certain unions of cells are permitted as tiles, we can look at all nonempty regions that can arise as the intersection of two tiles that actually occur in tilings of the entire region we are trying to tile. In some cases, these will just be the individual cells; in other cases, however, these ``pseudocells'' will be unions of two or more cells, and in this case it may be helpful to imagine identifying all the cells that belong to a common pseudocell. As can be seen in~\cite{DLPY} and the current article, determining when two cells must always be occupied by the same tile in any tiling of a large region can involve non-local arguments that are sensitive to the shape of the boundary of the large region.

\section*{Acknowledgements}
Colin Defant was supported by the National Science Foundation under Award No.\ 2201907 and by a Benjamin Peirce Fellowship at Harvard University. 
James Propp was supported by a Travel Support for Mathematicians gift from the Simons Foundation.

\bibliographystyle{amsinit}
\bibliography{ref}

\end{document}